\theoremstyle{plain}
\newtheorem{theorem}{Theorem}[section]
\newtheorem*{Theorem B}{Theorem B}
\newtheorem*{Theorem A}{Theorem A}
\newtheorem{proposition}{Proposition}[section]
\newtheorem{corollary}{Corollary}[section]
\newtheorem{definition}{Definition}[section]
\numberwithin{equation}{section}
\theoremstyle{remark}
\begin{document}
\title[invariant and anti-invariant submanifolds of special quasi-sasakian manifolds]
{invariant  and anti-invariant submanifolds of special quasi-sasakian manifolds}
\author[S. K. Hui and J. Roy]{Shyamal Kumar Hui and Joydeb Roy}
\subjclass[2010]{53C15, 53C40}
\keywords{Quasi Sasakian manifold, SQ-Sasakian manifold, invariant submanifold, totally geodesic submanifold, semisymmetric metric connection.}
\begin{abstract}
The present paper deals with the study of Chaki-pseudo parallel and Deszcz-pseudo parallel invariant submanifolds of
SQ-Sasakian manifolds with respect to Levi-Civita connection and semisymmetric metric connection and obtain that these two
 classes are equivalent with a certain condition. Also the invariant and anti-invariant submanifolds of SQ-Sasakian manifolds with respect to
 Levi-Civita connection as well as semisymmetric  metric connection whose metrics are Ricci solitons are studied.
\end{abstract}
\maketitle
\section{Introduction}
A $(2n+1)$-dimensional smooth manifold $\bar{M}$ is said to be an almost contact metric
structure $(\phi,\xi,\eta,g)$ if it satisfies the following relations \cite{BLAIR}
\begin{equation}\label{eqn1.1}
\phi^2 X= -X +\eta(X)\xi,\ \  \phi\xi=0,
\end{equation}
\begin{equation}\label{eqn1.2}
\eta (\xi)=1,\  g(X,\xi)= \eta(X),\ \eta(\phi X)=0,
\end{equation}
\begin{equation}\label{eqn1.3}
g(\phi X,\phi Y)= g(X,Y)- \eta (X)\eta(Y),\ g(\phi X,Y)=-g(X,\phi Y)
\end{equation}
for any vector fields X and Y on $\bar{M}$, where $\phi$ is a tensor of type $(1,1)$, $\xi$ is a vector field, $\eta$ is an $1$-form
and $g$ is a Riemannian metric on $\bar{M}$. A manifold equipped with an almost contact metric structure $(\phi,\xi,\eta,g)$
is called an almost contact metric manifold.

The fundamental $2$-form $\Phi$ on $\bar{M}$ is defined by $\Phi(X,Y)= g(X,\phi Y)$ and the
Nijenhuis tesor of $\bar{M}$ is given by
\begin{equation*}
  N(X,Y)= [\phi X,\phi Y]+ \phi^2[X,Y]-\phi[\phi X,Y]-\phi[X,\phi Y] + 2d\eta(X,Y)\xi,
\end{equation*}
where $d$ denotes the exterior derivative. Also the almost contact metric structure $(\phi,\xi,\eta)$ is called normal if and only if the Nijenhuis tensor vanishes. An almost contact metric manifold $\bar{M}^{2n+1}(\phi,\xi,\eta,g)$ is said to be \cite{Blair} \\(i) Sasakian if $\Phi= d\eta$ and
$(\phi,\xi,\eta)$ is normal,\\(ii) cosymplectic if $\Phi$ and $\eta$ are closed (i.e. $d\Phi=0$ and $d\eta=0$) and $(\phi,\xi,\eta)$ is normal and \\ $(iii)$ quasi-Sasakian if $\Phi$ is closed (i.e.,$d\Phi=0$)and $(\phi,\eta,\xi)$ is normal.

A 3-dimensional almost contact metric manifold is quasi-sasakian \cite{OLSZ} if and only if
\begin{equation}\label{eqn1.4}
\bar{\nabla}_X\xi= -\beta \phi X
\end{equation}

for  a certain smooth function $\beta$ on $M$ such that $(\xi \beta)=0$, $\bar{\nabla}$ being the Levi-Civita connection on $\bar{M}$.
However, in a quasi-Sasakian manifold of dimension greater than three, the relation $(1.4)$ does not hold in general.
In $1993$, Kwon and Kim \cite{KWON} constructed an example of almost contact metric manifolds of dimension
 greater than three such that the following relations hold:
 \begin{equation}\label{eqn1.5}
 d\Phi= 0,\ \bar{\nabla}_X\xi= -\beta\phi X
 \end{equation}
and $(\phi,\xi,\eta)$ is normal for a certain smooth function $\beta$ on $M$ such that $(\xi\beta)=0$.
 This new class of almost contact metric manifolds satisfying $(1.5)$ is said to be a
 special quasi-Sasakian manifold (briefly, SQ-Sasakian manifold)\cite{KWON} and such a manifold is denoted by $M^*$.
 Here the smooth function $\beta$ is said to be the structure function of the
 SQ-Sasakian manifold. Also it may be noted that an
 SQ-Sasakian manifold is cosymplectic if and only if $\beta=0$ and Sasakian if and only if $\beta=1$.
 Recently Shaikh and Ahmad \cite{SHAIKH} studied SQ-Sasakian manifolds.

 Due to significant applications in applied mathematics and theoretical Physics,
 the geometry of submanifolds has become an important subject.
 There are so many classes of submanifolds. In \cite{BEJ} Bejancu and Papaghuic
 introduced the notion of invariant submanifolds that geometry inherits
 almost all properties of ambient manifold. Thereafter several authors studied invariant submanifolds of different ambient manifolds.
 The present paper deals with the study of invariant submanifolds of SQ-Sasakian manifolds $M^*$. After introduction in section $1$,
 section $2$ deals with some preliminaries.

 As a generalisation of Ricci symmetric manifolds, Chaki \cite{CHAKI} introduced the notion of pseudo Ricci symmetric manifolds.
 We may say it as Chaki-pseudo Ricci symmetric manifolds. A non-flat Riemannian manifold $(M,g)$ is said to be Chaki-pseudo
  Ricci symmetric \cite{CHAKI} if its Ricci tensor S of type $(0,2)$ is not identically zero and satisfies the condition
 \begin{equation}\label{eqn1.6}
 (\nabla_{X}S)(Y,Z)= 2\alpha(X)S(Y,Z)+\alpha(Y)S(X,Z)+\alpha(Z)S(X,Y)
 \end{equation}
for all vector fields $X,Y,Z \in \chi(M)$, where $\alpha$ is a nowhere vanishing $1$-form.

In another direction, as a generalization of Ricci semisymmetric manifolds \cite{SZABO}, Deszcz introduced the notion
of Ricci pseudo symmetric manifolds \cite{DESZCZ}. We may call it as Deszcz-pseudo Ricci symmetric manifolds.
A Riemannian manifold $(M,g)$ $(n>2)$ is said to be Deszcz-pseudo Ricci symmetric \cite{DESZCZ} if
\begin{equation}\label{eqn1.7}
(R(X,Y)\cdot S)(Z,U)= L_{S}Q(g,S)(Z,U;X,Y)
\end{equation}
holds on $U_{S}=\{x \in \ M : (S-\frac{r}{n}g)_{x}\neq 0\}$ for all $X,Y,Z,U \ \in \ \chi(M)$, where $L_S$ is
some function on $U_S$, $R$ is the curvature tensor, $S$ is the Ricci tensor and $r$ is the scalar curvature
of the manifold $M$ and
\begin{eqnarray*}
Q(g,S)(Z,U;X,Y)&=& g(Y,Z)S(X,U)-g(X,Z)S(Y,U)\\
\nonumber &+& g(Y,U)S(X,Z)-g(X,U)S(Y,Z).
\end{eqnarray*}
The similar concept also took place in the submanifold theory, where second fundamental form
$h$ and the induced metric tensor $g$ plays an important role. Recently Hui and Mandal \cite{HUI}
studied Chaki-pseudo parallel and Deszcz-pseudo parallel Contact CR-submanifolds of Kenmotsu manifolds.
Following the same, section $3$ is denoted to the study of Chaki-pseudo
parallel and Deszcz-pseudo parallel invariant submanifolds of SQ-Sasakian manifolds and proved
that these two classes are equivalent with a certain condition.
However, it may be noted that Chaki-pseudo Ricci symmetric manifolds is different from Deszcz-pseudo
Ricci symmetric manifolds. Also it is shown that Chaki-pseudo parallel invariant submanifolds and
Deszcz-pseudo parallel invariant submanifolds of SQ-Sasakian manifolds admitting semisymmetric
metric connection are equivalent with a certain condition.

A Ricci soliton $(g,V,\lambda)$ on a Riemannian manifold $(M,g)$ is a generalization of an Einstein
metric such that \cite{HAMILTON}
\begin{equation}\label{eqn1.8}
\pounds_{V}g+2S+2\lambda g=0,
\end{equation}
where $S$ is the Ricci tensor, $\pounds_V$ is the Lie derivative operator along the vector field $V$
on $M$ and $\lambda$ is a real number. The Ricci soliton is shriking, steady and expanding according as $\lambda$
is negative, zero and positive respectively. In \cite{SHARMA} Sharma studied Ricci solitons in contact metric geometry.
Thereafter many authors studied Ricci solitons in different contact metric manifolds. Recently Hui et al. \cite{Hui} Ricci
solitons on submanifolds of $(LCS)_n$-manifolds. In section $4$ of the paper, we study Ricci solitons on invariant submanifolds
of SQ-Sasakian manifolds. Section $5$ is devoted to the study of anti-invariant submanifolds of SQ-Sasakian manifolds and Ricci 
solitons. Finally, we conclude in section $6$.
\section{Preliminaries}
 In an SQ-Sasakian manifold $M^*$, the following relations hold \cite{SHAIKH}
 \begin{equation}\label{eqn2.1}
 (\bar{\nabla}_{X} \phi)(Y)= \beta[g(X,Y)\xi-\eta(Y)X],
 \end{equation}
\begin{equation}\label{eqn2.2}
(\bar{\nabla}_X \eta)(Y)= \beta g(X,\phi Y),
\end{equation}
\begin{equation}\label{eqn2.3}
\bar{R}(X,Y)\xi= (Y\beta)\phi X-(X\beta)\phi Y+\beta^2[\eta(Y)X-\eta(X)Y],
\end{equation}
\begin{eqnarray}
\label{eqn2.4}
\eta(\bar{R}(X,Y)Z)&=&(X\beta)g(\phi Y,Z)- (Y\beta)g(\phi X,Z)\\
\nonumber &+& \beta^2[g(Y,Z)\eta(X) - g(X,Z)\eta(Y)],
\end{eqnarray}
\begin{equation}\label{eqn2.5}
  \bar{R}(\xi,X)Y= g(X,\phi Y)grad\beta +(Y\beta)\phi X +\beta^2[g(X,Y)\xi -\eta(Y)X],
\end{equation}
\begin{equation}\label{eqn2.6}
  \bar{S}(Y,\xi)= 2\eta\beta^2\eta(Y)-((\phi Y)\beta)
\end{equation}
for all vector fields $X,Y,Z$ on $M^*$ and $\bar{R}$ is the curvature tensor and $\bar{S}$ is the Ricci tensor of $M^*$

Let $M$ be a $(2m+1)$ dimensional $(m<n)$ submanifold of a SQ-Sasakian manifold $M^*$. Let us take $\nabla$
 and $\nabla^\bot$ be the induced connections on the tangent bundle $TM$ and the normal bundle $T^\bot M$ of $M$
  respectively. Then the Gauss and Weingarten formulae are given by
\begin{equation}\label{eqn2.7}
\bar{\nabla}_{X}Y=\nabla_{X}Y + h(X,Y)
\end{equation}
and
\begin{equation}\label{eqn2.8}
\bar{\nabla}_{X}V=-A_{V}X+ \nabla^{\bot}_{X}V,
\end{equation}
where $h$ and $A_V$ are second fundamental form and shape operator and they are related by $g(h(X,Y),V)=g(A_{V}X,Y)$
for all $X,Y  \in \Gamma(TM)$ and $V \in \Gamma(T^\bot M)$. If $h=0$ then
the submanifold is said to be totally geodesic. The covariant derivative of $h$ is given by
\begin{equation}\label{eqn2.9}
(\nabla_{X}h)(Y,Z)= \nabla^\bot_X(h(Y,Z))- h(\nabla_{X}Y,Z)-h(Y,\nabla_{X}Z)
\end{equation}
for any vector fields $X,Y,Z$ tangent to $M$.

For any submanifold $M$ of a Riemannian manifold $M^*$, the equation of Gauss is given by
\begin{eqnarray}
\label{eqn2.10}
\bar{R}(X,Y)Z&=& R(X,Y)Z + A_{h(X,Z)}Y-A_{h(Y,Z)}X\\
\nonumber &+& (\bar{\nabla}_{X}h)(Y,Z)- (\bar{\nabla}_{Y}h)(X,Z)
\end{eqnarray}
for any $X,Y,Z \in \Gamma(TM)$, where $\bar{R}$ and $R$ denote the Riemannian curvature tensors of $M^*$ and $M$
respectively. In \cite{FRIED} Friedmann and Schouten  introduced the notion of semisymmetric linear connection on a
 smooth manifold. Then Hayden \cite{HAYDEN} introduced the idea of metric connection with torsion on a Riemannian manifold.
 A systematic study of the semisymmetric metric connection on a Riemannian maninfold has been given in \cite{YANO}. A linear
 connection on a SQ-Sasakian manifold $M^*$ is said to be a semisymmetric connection if its torsion tensor $\tau$ of the
 connection $\widetilde{\bar{\nabla}}$ is of the form
 \begin{equation}\label{eqn2.11}
 \tau(X,Y)= \widetilde{\bar{\nabla}}_{X}Y-\widetilde{\bar{\nabla}}_{Y}X-[X,Y]
 \end{equation}
 satisfies $\tau(X,Y)= \eta(Y)X-\eta(X)Y$, where $\eta$ is an $1$-form. Further, if the semi symmetric connection
 $\widetilde{\bar{\nabla}}$ satisfies the condition $(\widetilde{\bar{\nabla}}_{X}g)(Y,Z)=0$ for all
 $X,Y,Z\in \chi(\bar{M})$, Lie algebra of vector fields on $M^*$, then $\widetilde{\bar{\nabla}}$ is said to be
 semisymmetric metric connection. The relation between semisymmetric metric connection $\widetilde{\bar{\nabla}}$
  and Levi-Civita connection on SQ-Sasakian manifold $M^*$ is
\begin{equation}\label{eqn2.12}
\widetilde{\bar{\nabla}}_{X}Y= \bar{\nabla}_{X}Y+ \eta(Y)X-g(X,Y)\xi.
\end{equation}
If $\bar{R}$ and $\widetilde{\bar{R}}$ are respectively the curvature tensor with respect the Levi-Civita connection
$\bar{\nabla}$ and semisymmetric metric connection $\widetilde{\bar{\nabla}}$ in a SQ-Sasakian manifold then by
virtue of (\ref{eqn1.4}) and (\ref{eqn2.2}), we have from (\ref{eqn2.12}) that
\begin{eqnarray}
\label{eqn2.13}
\ \ \ \ \ \ \ \ \ \widetilde{\bar{R}}(X,Y)Z&=& \bar{R}(X,Y)Z + g(X,Z)Y- g(Y,Z)X\\
\nonumber &+&\eta(Z)\{\eta(Y)X-\eta(X)Y\}
+\{g(Y,Z)\eta(X)-g(X,Z)\eta(Y)\}\xi\\
\nonumber &+&\beta\{g(Y,Z)\phi X-g(X,Z)\phi Y+\Phi(X,Z)Y-\Phi(Y,Z)X\}.
\end{eqnarray}
also from (\ref{eqn2.13}) we obtain
\begin{eqnarray}
\label{eqn2.14}
\widetilde{\bar{S}}(Y,Z)&=& \bar{S}(Y,Z)
-(2n-1)\{g(Y,Z)-\eta(Y)\eta(Z)+\Phi(Y,Z)\},
\end{eqnarray}
where $\widetilde{\bar{S}}$ and $\bar{S}$ are respectively the Ricci tensor of a SQ-Sasakian manifold with respect to
semisymmetric metric connection $\widetilde{\bar{\nabla}}$ and Levi-Civita connection $\bar{\nabla}$.

Again from (\ref{eqn2.13}) we get
\begin{eqnarray}
\label{eqn2.15}
\widetilde{\bar{R}}(X,Y)\xi &=&\beta^2\{\eta(Y)X-\eta(X)Y\}+\{\beta\eta(Y)+(Y\beta)\}\phi X\\
\nonumber &-&\{\beta\eta(X)+ (X\beta)\}\phi Y,
\end{eqnarray}
and
\begin{eqnarray}
\label{eqn2.16}
\widetilde{\bar{R}}(\xi,Y)Z &=& \beta^2\{g(Y,Z)\xi-\eta(Z)Y\}+\{Z\beta-\beta\eta(Z)\}\phi Y\\
\nonumber &+& \Phi(Y,Z)\{grad\beta-\beta\xi\}
\end{eqnarray}
for arbitrary vector fields $X,Y$ and $Z$ on $M^*$.

As a generalization of quasi-Einstein (or $\eta$-Einstein) manifolds, recently Shaikh \cite{Shaikh} introduced
the notion of pseudo quasi-Einstein (or pseudo $\eta$-Einstein) manifolds. A SQ-Sasakian manifold $M^*$
is said to be pseudo quasi-Einstein (or pseudo $\eta$-Einstein) manifold if its Ricci tensor $S$ of type $(0,2)$
is not identically zero and satisfies the following:
\begin{equation}\label{eqn2.17}
S(X,Y)=pg(X,Y)+q\eta(X)\eta(Y)+sD(X,Y),
\end{equation}
where $p,q,s$ are scalars for which $q\neq0, s\neq0$ and $D(X,\xi)=0$ for any vector field $X$. It may be
noted that every quasi-Einstein (or $\eta$-Einstein) manifold is a pseudo quasi-Einstein (or pseudo $\eta$-Einstein)
manifold but not conversely \cite{Shaikh}.
\section{Invariant Submanifolds of SQ-Sasakian manifolds}
On the analogy of almost contact Hermitian manifolds, the invariant and anti-invariant submanifolds are
depend on the behaviour of almost contact metric structure $\phi$. A submanifold $M$ of an almost contact metric manifold
is said to be invariant \cite{BEJ} if the structure vector field $\xi$ is tangent to $M$ at every point of $M$ and $\phi X$ is tangent to
$M$ for any vector field $X$ tangent to $M$ at every point of $M$, that is $\phi(TM)\subset TM$ at every point of $M$.

Now we prove the following:
\begin{proposition}
 Let $M$ be an invariant submanifold of a SQ-Sasakian manifold $M^*$. Then the following relations hold:
\begin{equation}\label{eqn3.1}
\nabla_{X}\xi= -\beta\phi X,
\end{equation}
\begin{equation}\label{eqn3.2}
h(X,\xi)=0,
\end{equation}
\begin{equation}\label{eqn3.3}
R(X,Y)\xi=(Y\beta)\phi X-(X\beta)\phi Y + \beta^2[\eta(Y)X-\eta(X)Y].
\end{equation}
\end{proposition}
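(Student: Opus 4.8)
The plan is to derive all three relations from the ambient structure equation \eqref{eqn1.5}, namely $\bar{\nabla}_X\xi=-\beta\phi X$, together with the Gauss--Weingarten machinery of Section~2. The recurring point is that since $M$ is invariant, $\phi$ carries $TM$ into $TM$; hence $-\beta\phi X$ is tangent to $M$, and moreover $\phi$ sends the normal bundle into itself. This last fact follows from \eqref{eqn1.3}: for $V\in\Gamma(T^{\bot}M)$ and $X\in\Gamma(TM)$ we have $g(\phi V,X)=-g(V,\phi X)=0$ because $\phi X\in\Gamma(TM)$, so $\phi V\in\Gamma(T^{\bot}M)$.

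First I would establish \eqref{eqn3.1} and \eqref{eqn3.2} together. Applying the Gauss formula \eqref{eqn2.7} with $Y=\xi$ gives $\bar{\nabla}_X\xi=\nabla_X\xi+h(X,\xi)$, and comparing with \eqref{eqn1.5} yields $\nabla_X\xi+h(X,\xi)=-\beta\phi X$. Since $-\beta\phi X$ is entirely tangent by invariance, equating tangential and normal components gives \eqref{eqn3.1} and \eqref{eqn3.2} at once.

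The substantive work is \eqref{eqn3.3}, for which I would apply the Gauss equation \eqref{eqn2.10} at $Z=\xi$. The shape-operator terms $A_{h(X,\xi)}Y$ and $A_{h(Y,\xi)}X$ vanish immediately by \eqref{eqn3.2}. The crux is to show that the remaining terms $(\bar{\nabla}_X h)(Y,\xi)-(\bar{\nabla}_Y h)(X,\xi)$ also vanish. Expanding each via \eqref{eqn2.9} and using $h(\,\cdot\,,\xi)=0$, the only surviving contributions come from $h(Y,\nabla_X\xi)$ and $h(X,\nabla_Y\xi)$; substituting $\nabla_X\xi=-\beta\phi X$ from \eqref{eqn3.1} reduces the difference to $\beta\bigl[h(Y,\phi X)-h(X,\phi Y)\bigr]$. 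Here lies the main obstacle: I must supply the auxiliary identity $h(X,\phi Y)=\phi\,h(X,Y)$ valid on invariant submanifolds. To obtain it, I would compute $(\bar{\nabla}_X\phi)(Y)=\bar{\nabla}_X(\phi Y)-\phi(\bar{\nabla}_X Y)$ using \eqref{eqn2.1}, then decompose both sides by the Gauss formula \eqref{eqn2.7}; since $\phi$ preserves both $TM$ and $T^{\bot}M$, reading off the normal component yields $h(X,\phi Y)=\phi\,h(X,Y)$. Combined with the symmetry of $h$, this forces $h(Y,\phi X)=\phi\,h(X,Y)=h(X,\phi Y)$, so the bracket is zero.

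With every correction term gone, \eqref{eqn2.10} collapses to $R(X,Y)\xi=\bar{R}(X,Y)\xi$. Substituting the ambient expression \eqref{eqn2.3} for $\bar{R}(X,Y)\xi$ then produces \eqref{eqn3.3} exactly. The delicate step throughout is the normal-component identity $h(X,\phi Y)=\phi\,h(X,Y)$, whose derivation hinges on $\phi$ stabilising the normal bundle; I would verify that stabilisation explicitly, as above, before invoking the identity.
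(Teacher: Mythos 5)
Your proposal is correct and follows essentially the same route as the paper: the Gauss formula \eqref{eqn2.7} at $Y=\xi$ with tangential/normal decomposition for \eqref{eqn3.1}--\eqref{eqn3.2}, then the Gauss equation \eqref{eqn2.10} at $Z=\xi$ combined with the ambient formula \eqref{eqn2.3} for \eqref{eqn3.3}. The only difference is that the paper asserts $R(X,Y)\xi=\bar{R}(X,Y)\xi$ without justifying the vanishing of the normal terms $(\bar{\nabla}_{X}h)(Y,\xi)-(\bar{\nabla}_{Y}h)(X,\xi)$, which your auxiliary identity $h(X,\phi Y)=\phi\,h(X,Y)$ correctly supplies; alternatively one could bypass that lemma by comparing only tangential components, since $\bar{R}(X,Y)\xi$ is already tangent to $M$ by \eqref{eqn2.3} and invariance.
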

\begin{proof}
From (\ref{eqn1.4}) and (\ref{eqn2.7}) we get
\begin{eqnarray}
\label{eqn3.4}
-\beta\phi X &=&\bar{\nabla}_{X}\xi \\
\nonumber &=& \nabla_{X}\xi+h(X,\xi).
\end{eqnarray}
Since $M$ is an invariant submanifold of a SQ-Sasakian manifold $M^*$, therefore for any $X \in \Gamma(TM)$,
$\phi X\in\Gamma(TM)$. Thus equating the tangential and normal part of (\ref{eqn3.4}) we get (\ref{eqn3.1}) and (\ref{eqn3.2}).
 Also using (\ref{eqn3.1}) in (\ref{eqn2.10}), we get $R(X,Y)\xi=\bar{R}(X,Y)\xi$ and hence we get (\ref{eqn3.3}).
\end{proof}
 Recently Hui and Mandal \cite{HUI} studied Chaki-pseudo parallel contact CR-submanifolds and Deszcz-pseudo parallel
contact CR-submanifolds of Kenmotsu manifolds. Following \cite{HUI} we can define the following:
\begin{definition}\cite{HUI}
 A submanifold $M$ of a SQ-Sasakian manifold $M^*$ is called Chaki-pseudo parallel
 if its second fundamental form $h$ satisfies
\begin{equation}\label{eqn3.5}
(\nabla_{X}h)(Y,Z)=2\alpha(X)h(Y,Z)+ \alpha(Y)h(X,Z)+\alpha(Z)h(X,Y)
\end{equation}
for all $X,Y,Z$ on $M$, where $\alpha$ is a nowhere vanishing $1$-form.
\end{definition}
 In particular, if $\alpha(X)=0$ then $h$ is said to be parallel and $M$ is said to be
 parallel submanifold of $M^*$. We now prove the
  following:
\begin{theorem}
Let $M$ be an invariant submanifold of a SQ-Sasakian manifold $M^*$. Then $M$ is
totally geodesic if and only if $M$ is Chaki-pseudo parallel with $\{\alpha(\xi)\}^2+\beta^2\neq0$.
\end{theorem}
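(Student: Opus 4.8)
The plan is to prove the two implications separately, the forward one being immediate and the converse carrying all the content. If $M$ is totally geodesic, then $h\equiv 0$, so by (\ref{eqn2.9}) every covariant derivative $(\nabla_X h)(Y,Z)$ vanishes while the right-hand side of (\ref{eqn3.5}) vanishes as well; hence the Chaki-pseudo parallel condition holds for an arbitrary $1$-form $\alpha$, and in particular one may select $\alpha$ with $\alpha(\xi)\neq 0$ so that $\{\alpha(\xi)\}^2+\beta^2\neq 0$ is met.

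For the converse I would assume $M$ is Chaki-pseudo parallel with $\{\alpha(\xi)\}^2+\beta^2\neq 0$ and specialise the defining identity (\ref{eqn3.5}) by taking $Z=\xi$. Expanding $(\nabla_X h)(Y,\xi)$ through (\ref{eqn2.9}) and using $h(\cdot,\xi)=0$ from (\ref{eqn3.2}) together with $\nabla_X\xi=-\beta\phi X$ from (\ref{eqn3.1}), all but one term drop and I obtain $(\nabla_X h)(Y,\xi)=\beta\,h(Y,\phi X)$. Simultaneously the right side of (\ref{eqn3.5}) at $Z=\xi$ collapses to $\alpha(\xi)h(X,Y)$, again because $h(X,\xi)=h(Y,\xi)=0$. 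Equating the two and invoking the symmetry of $h$ gives the key relation
\begin{equation*}
\beta\,h(\phi X,Y)=\alpha(\xi)\,h(X,Y).
\end{equation*}

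Next I would replace $X$ by $\phi X$ in this relation and apply $\phi^2 X=-X+\eta(X)\xi$ from (\ref{eqn1.1}); since $h(\xi,Y)=0$, the correction term disappears and I am left with the companion relation $-\beta\,h(X,Y)=\alpha(\xi)\,h(\phi X,Y)$. Eliminating $h(\phi X,Y)$ between the two equations, for instance by multiplying the first by $\alpha(\xi)$ and inserting the second, yields
\begin{equation*}
\bigl(\{\alpha(\xi)\}^2+\beta^2\bigr)\,h(X,Y)=0.
\end{equation*}
The hypothesis $\{\alpha(\xi)\}^2+\beta^2\neq 0$ then forces $h(X,Y)=0$ for all $X,Y$, so $M$ is totally geodesic.

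The main point requiring care is the bookkeeping in the two specialisations: one must track precisely which $h(\cdot,\xi)$ terms vanish and apply $\phi^2 X=-X+\eta(X)\xi$ so that the $\eta(X)\xi$ correction is killed by $h(\xi,\cdot)=0$. Once both scalar relations in $h(X,Y)$ and $h(\phi X,Y)$ are established, the algebraic elimination is routine, and the nondegeneracy condition is exactly what is needed to divide it out and conclude that $h$ vanishes identically.
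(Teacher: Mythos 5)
Your proposal is correct and follows essentially the same route as the paper's proof: specialising the Chaki-pseudo parallel identity at $Z=\xi$, using $h(\cdot,\xi)=0$ and $\nabla_X\xi=-\beta\phi X$ to obtain $\beta\,h(\phi X,Y)=\alpha(\xi)\,h(X,Y)$, replacing $X$ by $\phi X$ via $\phi^2X=-X+\eta(X)\xi$ to get the companion relation, and eliminating to conclude $\bigl(\{\alpha(\xi)\}^2+\beta^2\bigr)h(X,Y)=0$. The only difference is that you spell out the forward direction (which the paper dismisses as trivial), including the valid observation that when $h\equiv0$ one may choose $\alpha$ with $\alpha(\xi)\neq0$ so the nondegeneracy condition is satisfied even if $\beta=0$.
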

\begin{proof}
Suppose that $M$ is a Chaki-pseudo parallel invariant submanifold of a SQ-Sasakian manifold $M^*$.

Then by virtue of (\ref{eqn2.9}) we have from (\ref{eqn3.5}) that
\begin{align}\label{eqn3.6}
& \nabla^\bot_{X}h(Y,Z)-h(\nabla_{X}Y,Z)-h(Y,\nabla_{X}Z) &  \\
\nonumber & =2\alpha(X)h(Y,Z)+\alpha(Y)h(X,Z)+\alpha(Z)h(X,Y).
\end{align}
Putting $Z=\xi$ in (\ref{eqn3.6}) and using (\ref{eqn3.2}) we get
\begin{equation}\label{eqn3.7}
-h(Y,\nabla_{X}\xi)=\alpha(\xi)h(X,Y).
\end{equation}
In view of (\ref{eqn3.1}), (\ref{eqn3.7}) yields
\begin{equation}\label{eqn3.8}
\beta h(Y,\phi X)-\alpha(\xi)h(X,Y)=0.
\end{equation}
Replacing $X$ by $\phi X$ in (\ref{eqn3.8}) and using (\ref{eqn1.1}) and (\ref{eqn3.1}), we get
\begin{equation}\label{eqn3.9}
\beta h(X,Y)+\alpha(\xi)h(Y,\phi X)=0.
\end{equation}
From $(\ref{eqn3.8})$ and $(\ref{eqn3.9})$ we get $[\{\alpha(\xi)\}^2+\beta^2]h(X,Y)=0$, which implies that
$h(X,Y)=0$ for all $X,Y$ on $M$ as $[\alpha(\xi)]^2+\beta^2\neq0$. Hence $M$ is totally geodesic submanifold.

The converse part is trivial. This proves the theorem.
\end{proof}
\begin{corollary}
Let $M$ be an invariant submanifold of a SQ-Sasakian manifold $M^*$.
Then $M$ is totally geodesic if and only if $M$ is parallel.
\end{corollary}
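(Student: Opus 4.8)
The plan is to read off both implications directly from the machinery already assembled for the preceding theorem, since being \emph{parallel} is precisely the limiting case $\alpha\equiv 0$ of the Chaki-pseudo parallel condition \eqref{eqn3.5}.

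First I would dispose of the forward implication. If $M$ is totally geodesic then $h\equiv 0$, and substituting this into the defining formula \eqref{eqn2.9} for the covariant derivative of $h$ makes every term on the right-hand side vanish; hence $(\nabla_X h)(Y,Z)=0$ for all $X,Y,Z$, that is, $h$ is parallel. This direction needs no computation and no hypothesis on $\beta$.

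For the converse I would specialize the computation in the proof of the theorem to $\alpha\equiv 0$. Setting $\alpha=0$ in \eqref{eqn3.5} and expanding via \eqref{eqn2.9} gives $\nabla^\bot_X h(Y,Z)-h(\nabla_X Y,Z)-h(Y,\nabla_X Z)=0$; putting $Z=\xi$ and using $h(X,\xi)=0$ from \eqref{eqn3.2} collapses this to $-h(Y,\nabla_X\xi)=0$, and then $\nabla_X\xi=-\beta\phi X$ from \eqref{eqn3.1} yields $\beta\,h(Y,\phi X)=0$. This is exactly the $\alpha(\xi)=0$ specialization of \eqref{eqn3.8}--\eqref{eqn3.9}, whence \eqref{eqn3.9} gives $\beta\,h(X,Y)=0$ at once.

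The main point --- and the only place a hypothesis enters --- is the passage from $\beta\,h(X,Y)=0$ to $h\equiv 0$, which is legitimate exactly when $\beta\neq 0$. This is precisely what the condition $\{\alpha(\xi)\}^2+\beta^2\neq 0$ of the theorem reduces to in the parallel case $\alpha(\xi)=0$. The most economical route, in fact, is simply to note that a parallel submanifold satisfies \eqref{eqn3.5} with $\alpha\equiv 0$, so that the theorem applies verbatim with $\{\alpha(\xi)\}^2+\beta^2=\beta^2\neq 0$ and delivers the totally geodesic conclusion immediately. The one subtlety I would flag is the cosymplectic case $\beta\equiv 0$: there $\nabla_X\xi=0$ and the argument yields no control on $h$, so the equivalence must be read as carrying the standing nondegeneracy $\beta\neq 0$.
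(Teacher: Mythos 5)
Your proposal coincides with the paper's (implicit) proof: Corollary 3.1 is stated as an immediate specialization of Theorem 3.1 to the case $\alpha\equiv 0$, exactly the route you take, with the forward direction trivial from \eqref{eqn2.9} and the converse obtained by setting $\alpha(\xi)=0$ in \eqref{eqn3.8}--\eqref{eqn3.9}. Your closing caveat is also well taken: in the parallel case the hypothesis $\{\alpha(\xi)\}^2+\beta^2\neq 0$ of Theorem 3.1 reduces to $\beta\neq 0$, a nondegeneracy condition that the paper's statement of the corollary silently drops and without which the converse fails in the cosymplectic case $\beta\equiv 0$.
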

From Theorem $3.1$, we can state the following:
\begin{corollary}
Let $M$ be an invariant submanifold of a Sasakian manifold $M^*$. Then $M$ is totally geodesic if and only if
$M$ is Chaki-pseudo parallel with $[\alpha(\xi)]^2+1\neq0$.
\end{corollary}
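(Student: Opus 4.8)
The plan is to obtain this corollary as an immediate specialization of Theorem 3.1. As recalled in the introduction, an SQ-Sasakian manifold $M^*$ is Sasakian if and only if its structure function satisfies $\beta = 1$. Consequently every Sasakian manifold is a particular instance of an SQ-Sasakian manifold in which $\beta \equiv 1$, and an invariant submanifold of a Sasakian manifold is, in particular, an invariant submanifold of an SQ-Sasakian manifold. This is the only structural observation the argument needs.

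First I would substitute the constant value $\beta = 1$ throughout the statement of Theorem 3.1. The genericity hypothesis $\{\alpha(\xi)\}^2 + \beta^2 \neq 0$ then reduces to $[\alpha(\xi)]^2 + 1 \neq 0$, which is precisely the condition displayed in the corollary. Since Theorem 3.1 asserts that, under this condition, $M$ is totally geodesic if and only if $M$ is Chaki-pseudo parallel, the corollary follows verbatim once $\beta$ is set equal to $1$; no separate computation is required.

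The only point that deserves a moment's attention is to confirm that the hypotheses of Theorem 3.1 remain genuinely in force when $\beta$ is the constant function $1$. Inspecting its proof, the argument rests solely on the relations (3.1), (3.2) and (1.1), each of which holds for an arbitrary structure function, constant ones included; no further assumption on $\beta$ is invoked. Hence the specialization is legitimate and there is no substantive obstacle here — the corollary is essentially Theorem 3.1 restated in the Sasakian case. One might even remark, as an aside, that for a real-valued $1$-form $\alpha$ the condition $[\alpha(\xi)]^2 + 1 \neq 0$ is automatically satisfied, so in the Sasakian setting it imposes no actual restriction.
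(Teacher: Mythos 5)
Your proposal is correct and matches the paper's own route exactly: the paper derives this corollary directly from Theorem 3.1 by specializing to the Sasakian case $\beta=1$, which turns the condition $\{\alpha(\xi)\}^2+\beta^2\neq0$ into $[\alpha(\xi)]^2+1\neq0$. Your closing remark that this condition is automatic for a real-valued $1$-form $\alpha$ is also accurate, though the paper does not make it.
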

\begin{corollary}
  Let $M$ be an invariant submanifold of a Cosymplectic manifold $M^*$. Then $M$ is totally geodesic
if and only if $M$ is Chaki-pseudo parallel.
\end{corollary}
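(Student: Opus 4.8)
The plan is to read this off as a direct specialization of Theorem 3.1, the only extra ingredient being the dichotomy recorded in the introduction: an SQ-Sasakian manifold is cosymplectic exactly when its structure function $\beta$ vanishes identically. So I would fix $\beta=0$ at the outset and track how each relation used in the proof of Theorem 3.1 degenerates, rather than re-deriving anything from scratch.

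Concretely, with $\beta=0$ the relation (\ref{eqn3.1}) becomes $\nabla_{X}\xi=0$, so $\xi$ is parallel on $M$, while (\ref{eqn3.2}), namely $h(X,\xi)=0$, survives untouched. Rerunning the argument of Theorem 3.1 verbatim, I would set $Z=\xi$ in the Chaki-pseudo parallel condition (\ref{eqn3.5}) and invoke (\ref{eqn3.2}) to obtain (\ref{eqn3.7}), that is $-h(Y,\nabla_{X}\xi)=\alpha(\xi)h(X,Y)$; substituting $\nabla_{X}\xi=0$ annihilates the left-hand side and leaves $\alpha(\xi)h(X,Y)=0$. This is exactly what (\ref{eqn3.8}) collapses to once its $\beta h(Y,\phi X)$ term disappears, so the combined identity $[\{\alpha(\xi)\}^{2}+\beta^{2}]h(X,Y)=0$ of Theorem 3.1 degenerates to $\{\alpha(\xi)\}^{2}h(X,Y)=0$, whence $h\equiv 0$ and $M$ is totally geodesic. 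The converse is immediate, since $h=0$ forces $(\nabla_{X}h)(Y,Z)=0$ and makes both sides of (\ref{eqn3.5}) vanish.

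The step I expect to be the genuine obstacle is the side condition, not the computation. Specializing the hypothesis $\{\alpha(\xi)\}^{2}+\beta^{2}\neq 0$ of Theorem 3.1 to $\beta=0$ leaves precisely $\alpha(\xi)\neq 0$, and this is exactly what is needed to cancel the scalar factor in $\{\alpha(\xi)\}^{2}h(X,Y)=0$ and conclude $h=0$. Just as the Sasakian corollary retains the (here vacuously true) proviso $[\alpha(\xi)]^{2}+1\neq 0$, the cosymplectic case genuinely needs $\alpha(\xi)\neq 0$; I would therefore either reinstate this proviso in the statement or supply an argument that it is automatic, since the mere nowhere-vanishing of the abstract $1$-form $\alpha$ does not by itself guarantee $\alpha(\xi)\neq 0$. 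Pinning down that point is the one thing I would want settled before treating the equivalence as complete.
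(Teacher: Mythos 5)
Your proof is correct and follows essentially the same route as the paper, which offers no separate argument for this corollary beyond the remark ``From Theorem 3.1'' --- i.e.\ one specializes the theorem to the cosymplectic case $\beta=0$, exactly as you do, so that $[\{\alpha(\xi)\}^{2}+\beta^{2}]h(X,Y)=0$ collapses to $\{\alpha(\xi)\}^{2}h(X,Y)=0$. Your closing observation is also well taken: the paper's statement silently drops the residual proviso $\alpha(\xi)\neq 0$, which is \emph{not} guaranteed by the nowhere-vanishing of the $1$-form $\alpha$ (a nonzero covector can still annihilate $\xi$), so the corollary as printed has a genuine gap and your more careful version --- reinstating $\alpha(\xi)\neq 0$ --- is the one that actually follows from Theorem 3.1.
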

Again following the definition of Deszcz-pseudo Ricci symmetric manifold, we can define the following:
\begin{definition}\cite{HUI}
A submanifold $M$ of a SQ-Sasakian manifold $M^*$ is said to be Deszcz-pseudo parallel if its
second fundamental form $h$ satisfies
\begin{eqnarray}
\label{eqn3.10}
\bar{R}(X,Y)\cdot h&=&(\bar{\nabla}_X\bar{\nabla}_Y-\bar{\nabla}_Y\bar{\nabla}_X-\bar{\nabla}_{[X,Y]})h\\
\nonumber &=&L_{h}Q(g,h),
\end{eqnarray}
where $L_h$ is some function on $W=\{x \in  M:(h-Hg)_x\neq0\}$ for all vector fields $X,Y$ tangent to $M$, and $\bar{R}$
is the curvature tensor of $\bar{M}$.
\end{definition}

In particular, if $L_h=0$ then $M$ is said to be semiparallel.

We now prove the following:
\begin{theorem}
Let $M$ be an invariant submanifold of a SQ-Sasakian manifold $M^*$. Then $M$ is totally geodesic if and
only if $M$ is Deszcz-pseudo parallel with $L_h+\beta^2\neq0$
\end{theorem}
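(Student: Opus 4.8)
The plan is to run the same kind of argument as in Theorem 3.1, but now feeding the curvature identity \eqref{eqn3.3} for $R(X,Y)\xi$ into the Deszcz condition \eqref{eqn3.10}. The ingredients I rely on are \eqref{eqn3.2}, namely $h(X,\xi)=0$, together with the structural facts $\phi\xi=0$, $\eta(\xi)=1$ and $(\xi\beta)=0$ for an SQ-Sasakian manifold.

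First I would expand the left-hand side of \eqref{eqn3.10} as the action of the curvature, viewed as a derivation, on the normal-bundle-valued form $h$:
\[
(\bar R(X,Y)\cdot h)(Z,U)=R^\perp(X,Y)h(Z,U)-h(R(X,Y)Z,U)-h(Z,R(X,Y)U),
\]
with $R^\perp$ the normal curvature and $R$ the curvature of the induced connection. Specializing to $U=\xi$ and invoking $h(\,\cdot\,,\xi)=0$ from \eqref{eqn3.2} removes the first two terms — the first because $h(Z,\xi)=0$, the second because $h(R(X,Y)Z,\xi)=0$ for every $X,Y$ — so that $(\bar R(X,Y)\cdot h)(Z,\xi)=-h(Z,R(X,Y)\xi)$. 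At this point I would substitute \eqref{eqn3.3}, which rewrites the right-hand side in terms of $h$, $\phi$, $\eta$ and the derivatives $(X\beta),(Y\beta)$ only.

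For the right-hand side I would expand $L_h\,Q(g,h)(Z,\xi;X,Y)$ from the expression defining $Q(g,h)$ (the analogue of the formula for $Q(g,S)$), where again the two terms carrying $h(\,\cdot\,,\xi)$ vanish by \eqref{eqn3.2}. Equating the two sides leaves one identity still containing the mixed terms $(Y\beta)h(Z,\phi X)$, $(X\beta)h(Z,\phi Y)$ and the $\beta^2$-contributions. The decisive move is to then set $Y=\xi$: the condition $(\xi\beta)=0$ kills the first $\beta$-derivative term, $\phi\xi=0$ kills the second, and $\eta(\xi)=1$ together with $h(Z,\xi)=0$ collapses everything to the single relation $(L_h+\beta^2)h(X,Z)=0$. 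Since $L_h+\beta^2\neq0$ by hypothesis, this yields $h\equiv0$ and hence $M$ is totally geodesic; the converse is trivial, as $h=0$ makes both $\bar R(X,Y)\cdot h$ and $Q(g,h)$ vanish identically.

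The step I expect to be most delicate is the derivation expansion together with the order of the two substitutions. After only $U=\xi$ the terms involving $\phi$ and the gradient of $\beta$ genuinely survive, so one must not try to conclude at that stage; they cancel exactly and only after the second specialization $Y=\xi$. This is also what fixes the precise form of the hypothesis: whereas the Chaki computation produced the coefficient $\{\alpha(\xi)\}^2+\beta^2$ through a pair of $\phi$-substitutions, here the single second substitution $Y=\xi$ produces the coefficient $L_h+\beta^2$ directly.
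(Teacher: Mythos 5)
Your proposal is correct and follows essentially the same route as the paper: the paper substitutes $X=U=\xi$ into the expanded identity \eqref{eqn3.11} at once to obtain $h(Z,R(\xi,Y)\xi)=L_h\,h(Y,Z)$ and then applies \eqref{eqn3.3} with \eqref{eqn3.2}, $\phi\xi=0$, $\eta(\xi)=1$ and $(\xi\beta)=0$, whereas you make the same two specializations sequentially ($U=\xi$ first, then $Y=\xi$), arriving at the identical relation $(L_h+\beta^2)h=0$. The difference is purely in the order of substitutions, and your treatment of the converse matches the paper's.
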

\begin{proof}
 Let $M$ be an invariant submanifold of a SQ-Sasakian manifold$M^*$. First, suppose that $M$ is
Deszcz-pseudo parallel. Then we have the relation (\ref{eqn3.10}), i.e.
\begin{equation*}
(\bar{R}(X,Y)\cdot h)(Z,U)= L_hQ(g,h)(Z,U;X,Y)
\end{equation*}
 i.e.
 \begin{align}\label{eqn3.11}
 &R^\bot(X,Y)h(Z,U)-h(R(X,Y)Z,U)-h(Z,R(X,Y)U)& \\
\nonumber =& L_{h}[g(Y,Z)h(X,U)-g(X,Z)h(Y,U)+g(Y,U)h(X,Z)- g(X,U)h(Y,Z)]
 \end{align}
 Putting $X=U=\xi$ in (\ref{eqn3.11}) and using (\ref{eqn3.2}) we get
\begin{equation}\label{eqn3.12}
h(Z,R(\xi,Y)\xi)=L_{h}h(Y,Z).
\end{equation}
 In view of (\ref{eqn3.2}) and (\ref{eqn3.3}), (\ref{eqn3.12}) yields $(L_{h}+\beta^2)h(Y,Z)=0$,
 which implies that $h(Y,Z)=0$ for all $Y,Z$ on $M$, i.e., $M$ is totally geodesic, since $L_{h}+\beta^2\neq0$.
 The converse part is trivial. This proves the theorem.
 \end{proof}
From Theorem $\ref{eqn3.2}$, we can state the following:
\begin{corollary}
Let $M$ be an invariant submanifold of a SQ-Sasakian manifold $M^*$. Then $M$ is totally
geodesic if and only if $M$ is semiparallel.
\end{corollary}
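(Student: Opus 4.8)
The plan is to read semiparallelism as the special case $L_{h}=0$ of the Deszcz-pseudo parallel condition of Definition~3.2, and then to replay the computation already used in the proof of Theorem~3.2. The forward implication is immediate: if $M$ is totally geodesic then $h\equiv0$, so both $\bar{R}(X,Y)\cdot h$ and $Q(g,h)$ vanish identically and the defining relation (\ref{eqn3.10}) holds with $L_{h}=0$; hence $M$ is semiparallel.

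For the converse I would begin from the semiparallel hypothesis written in the form (\ref{eqn3.11}) with $L_{h}=0$, so that the right-hand side is zero and
\begin{equation*}
R^{\bot}(X,Y)h(Z,U)-h(R(X,Y)Z,U)-h(Z,R(X,Y)U)=0.
\end{equation*}
Putting $X=U=\xi$ and using $h(\cdot,\xi)=0$ from (\ref{eqn3.2}), the first and second terms drop out and one is left with $h\bigl(Z,R(\xi,Y)\xi\bigr)=0$. I then substitute (\ref{eqn3.3}) at $X=\xi$; since $\phi\xi=0$ and $(\xi\beta)=0$, the expression for $R(\xi,Y)\xi$ reduces to $\beta^{2}\bigl(\eta(Y)\xi-Y\bigr)$, and feeding this back while again invoking $h(\cdot,\xi)=0$ collapses the curvature term to $-\beta^{2}h(Y,Z)$. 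Thus one recovers $\beta^{2}h(Y,Z)=0$, which is exactly the relation $(L_{h}+\beta^{2})h(Y,Z)=0$ of Theorem~3.2 specialised to $L_{h}=0$.

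From $\beta^{2}h(Y,Z)=0$ the conclusion $h\equiv0$, and hence total geodesy, follows as soon as $\beta\neq0$. The main obstacle is precisely the cosymplectic locus $\beta=0$: there the identity $\beta^{2}h=0$ carries no information and the single substitution $X=U=\xi$ extracts nothing, so the argument does not close. I would therefore either keep the non-degeneracy hypothesis $L_{h}+\beta^{2}\neq0$ of Theorem~3.2 (which for a semiparallel submanifold reduces to $\beta\neq0$), or, to reach the unconditional statement, supplement the $\xi$-substitution with further choices of the free vectors in the semiparallel equation so as to recover $h$ when $\beta=0$. Making this $\beta$-dependence explicit is, to my mind, the genuine content of the proof.
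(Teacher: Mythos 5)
Your argument is precisely the paper's: the corollary is presented there as the special case $L_h=0$ of Theorem 3.2, whose proof is exactly your computation --- substitute $X=U=\xi$ into the semiparallel condition, use $h(\cdot,\xi)=0$ together with $R(\xi,Y)\xi=\beta^2(\eta(Y)\xi-Y)$ (from $\phi\xi=0$ and $\xi\beta=0$), and obtain $(L_h+\beta^2)h(Y,Z)=0$, which for $L_h=0$ reduces to $\beta^2h=0$. Your caveat about the locus $\beta=0$ is also correct and is a point the paper glosses over: since Theorem 3.2 carries the hypothesis $L_h+\beta^2\neq0$, the unconditional corollary as printed tacitly assumes the structure function $\beta$ is non-vanishing (on a cosymplectic region the $\xi$-substitution yields no information, the same slip occurring in the paper's cosymplectic corollary, where the needed condition would be $L_h\neq0$), so making the requirement $\beta\neq0$ explicit, as you do, is a genuine correction to the statement rather than a defect of your proof.
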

\begin{corollary}
Let $M$ be an invariant submanifold of a Sasakian manifold $M^*$. Then $M$ is totally
geodesic if and only if $M$ is Deszcz-pseudo parallel with $L_{h}+1\neq0$.
\end{corollary}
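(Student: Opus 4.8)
The plan is to derive this corollary as an immediate specialization of Theorem 3.2, using the fact recorded in the introduction that a Sasakian manifold is precisely an SQ-Sasakian manifold whose structure function is the constant $\beta=1$.

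First I would check that all hypotheses of Theorem 3.2 survive the substitution $\beta\equiv1$. With $\beta=1$ the defining relations (\ref{eqn1.5}) become the Sasakian relations $\bar{\nabla}_X\xi=-\phi X$ and $d\Phi=0$, while the structural constraint $\xi\beta=0$ holds automatically because a constant is annihilated by every vector field. Thus $M^*$ is a legitimate SQ-Sasakian manifold with $\beta=1$, and $M$ is an invariant submanifold of it, so Theorem 3.2 applies verbatim.

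Substituting $\beta=1$ into the conclusion of Theorem 3.2, the non-degeneracy condition $L_h+\beta^2\neq0$ becomes $L_h+1\neq0$, while the equivalence between $M$ being totally geodesic and $M$ being Deszcz-pseudo parallel is carried over unchanged. This is exactly the statement of the corollary.

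As a consistency check I would rerun the argument of Theorem 3.2 directly in the Sasakian case: since $X\beta=0$ for all $X$, the curvature identity (\ref{eqn3.3}) collapses to $R(X,Y)\xi=\eta(Y)X-\eta(X)Y$, whence $R(\xi,Y)\xi=\eta(Y)\xi-Y$; inserting this into (\ref{eqn3.12}) and using $h(Z,\xi)=0$ gives $(L_h+1)h(Y,Z)=0$, so that $L_h+1\neq0$ forces $h\equiv0$. There is essentially no obstacle in this proof — the only point deserving attention is confirming that the identification $\beta=1$ is compatible with the requirement $\xi\beta=0$, which it plainly is.
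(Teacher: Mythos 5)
Your proposal is correct and follows exactly the route the paper intends: the paper states this corollary without a separate proof, as an immediate specialization of Theorem 3.2 using the fact (noted in the introduction) that an SQ-Sasakian manifold is Sasakian if and only if $\beta=1$, so the condition $L_h+\beta^2\neq0$ becomes $L_h+1\neq0$. Your extra consistency check — rerunning the Theorem 3.2 argument with $R(\xi,Y)\xi=\eta(Y)\xi-Y$ and $h(Z,\xi)=0$ to get $(L_h+1)h(Y,Z)=0$ — is sound and merely confirms the same computation.
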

\begin{corollary}
Let $M$ be an invariant submanifold of a cosymplectic manifold of a SQ-Sasakian manifold $M^*$. Then $M$ is totally geodesic
if and only if $M$ is Deszcz-pseudo parallel.
\end{corollary}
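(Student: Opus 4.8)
The plan is to read this corollary off Theorem 3.2 by specializing the structure function. As recalled in the introduction, an SQ-Sasakian manifold is cosymplectic exactly when $\beta\equiv0$, so everything reduces to inserting $\beta=0$ into the computation already carried out for Theorem 3.2. In particular I would not need any new geometric input beyond what produces (\ref{eqn3.12}).

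Concretely, I would begin from the Deszcz-pseudo parallel identity (\ref{eqn3.11}) and set $X=U=\xi$, using $h(\xi,\cdot)=0$ from (\ref{eqn3.2}) to kill the boundary terms exactly as before; this produces (\ref{eqn3.12}), namely $h(Z,R(\xi,Y)\xi)=L_{h}\,h(Y,Z)$. The only input that changes is the curvature term $R(\xi,Y)\xi$. Putting $\beta=0$ in (\ref{eqn3.3}) gives $R(X,Y)\xi=0$ identically, hence $R(\xi,Y)\xi=0$, so the left-hand side of (\ref{eqn3.12}) vanishes and we are left with $L_{h}\,h(Y,Z)=0$.

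The step that needs care is the conclusion, and I expect it to be the one genuinely new point. In the general theorem the surviving coefficient is $L_{h}+\beta^2$, and its non-vanishing forces $h=0$; here $\beta^2=0$, so the coefficient is simply $L_{h}$. Thus the forward implication (Deszcz-pseudo parallel $\Rightarrow$ totally geodesic) goes through precisely on the locus where $L_{h}\neq0$---that is, the hypothesis $L_{h}+\beta^2\neq0$ of Theorem 3.2 degenerates to $L_{h}\neq0$ in the cosymplectic case---and on that locus $h(Y,Z)=0$ for all $Y,Z$, so $M$ is totally geodesic. The converse is trivial: if $h\equiv0$ then both sides of (\ref{eqn3.11}) vanish identically, so $M$ is (vacuously) Deszcz-pseudo parallel. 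The main obstacle is therefore this bookkeeping of $L_{h}$: the vanishing of the curvature contribution removes the ``automatic'' nonzero term $\beta^2$ that did the work in Theorem 3.2, so the statement really relies on $L_{h}$ being nowhere zero on $W=\{x\in M:(h-Hg)_x\neq0\}$, and I would phrase the equivalence accordingly.
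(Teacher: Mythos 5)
Your proof is correct and is precisely the paper's (implicit) argument: the corollary is stated without separate proof as a specialization of Theorem 3.2, obtained by putting $\beta=0$ (the cosymplectic case), so that $R(\xi,Y)\xi=0$ by (\ref{eqn3.3}) and (\ref{eqn3.12}) collapses to $L_h\,h(Y,Z)=0$. Your closing observation is also well taken: the surviving coefficient is $L_h$ alone, so the forward implication genuinely requires $L_h\neq 0$ (the degeneration of $L_h+\beta^2\neq 0$), a hypothesis the paper's statement of the corollary silently drops, making your more careful phrasing the defensible version.
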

From Corollary 3.1, Corollary 3.4, Theorem 3.1 and Theorem 3.2, we can state the following:
\begin{theorem}
Let $M$ be an invariant submanifold of a SQ-Sasakian manifold $M^*$. Then the following statements are equivalent:\\
(i) $M$ is totally geodesic,\\(ii) $M$ is parallel,\\(iii) $M$ is semiparallel,\\ (iv) $M$ is Chaki-pseudo parallel
with $\{\alpha(\xi)\}^2+\beta^2\neq0$,\\ (v) $M$ is Deszcz-pseudo parallel with $L_{h}+\beta^2\neq0$.
\end{theorem}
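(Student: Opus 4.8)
The plan is to establish all five statements as equivalent by routing every equivalence through the single pivot condition (i), namely that $M$ is totally geodesic. Each of the remaining four conditions has already been shown, in the preceding results, to be individually equivalent to (i); so the theorem reduces to an application of transitivity of logical equivalence and requires no fresh computation.

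Concretely, I would proceed as follows. First, Theorem 3.1 gives directly the equivalence (i) $\Leftrightarrow$ (iv): $M$ is totally geodesic if and only if $M$ is Chaki-pseudo parallel with $\{\alpha(\xi)\}^2+\beta^2\neq 0$. Next, Corollary 3.1 supplies (i) $\Leftrightarrow$ (ii), identifying totally geodesic with parallel. Then Corollary 3.4 yields (i) $\Leftrightarrow$ (iii), identifying totally geodesic with semiparallel. Finally, Theorem 3.2 provides (i) $\Leftrightarrow$ (v): $M$ is totally geodesic if and only if $M$ is Deszcz-pseudo parallel with $L_h+\beta^2\neq 0$. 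Having four biconditionals all sharing (i) as a common member, transitivity forces any two of the five statements to be equivalent, which is exactly the assertion.

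The one point that warrants care, rather than a genuine obstacle, is the bookkeeping of the nondegeneracy hypotheses. In the forward directions (from pseudo-parallelism to totally geodesic) the conditions $\{\alpha(\xi)\}^2+\beta^2\neq 0$ and $L_h+\beta^2\neq 0$ are precisely what allows one to cancel the scalar factor multiplying $h$ in the relations $[\{\alpha(\xi)\}^2+\beta^2]h(X,Y)=0$ and $(L_h+\beta^2)h(Y,Z)=0$ established in the proofs of Theorems 3.1 and 3.2. In the converse directions the vanishing $h=0$ makes the defining equations of Chaki-pseudo parallel and Deszcz-pseudo parallel hold trivially, so no sign condition is needed there. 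I would therefore simply record that each cited equivalence already carries its own hypothesis, and then assemble the four biconditionals into the stated chain, noting explicitly that (ii), (iii), (iv) and (v) are mutually equivalent because each is equivalent to (i).
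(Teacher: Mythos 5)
Your proposal is correct and follows essentially the same route as the paper: the paper states this theorem with no new argument, deriving it directly from Corollary 3.1 ((i)$\Leftrightarrow$(ii)), Corollary 3.4 ((i)$\Leftrightarrow$(iii)), Theorem 3.1 ((i)$\Leftrightarrow$(iv)) and Theorem 3.2 ((i)$\Leftrightarrow$(v)), exactly the transitivity-through-(i) assembly you describe. Your added remark on where the nondegeneracy hypotheses $\{\alpha(\xi)\}^2+\beta^2\neq 0$ and $L_h+\beta^2\neq 0$ are actually used is a correct and slightly more explicit account of the same bookkeeping implicit in the paper.
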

We now consider $M$ be an invariant submanifold of a SQ-Sasakian manifold $M^*$ with respect to semisymmetric metric
connection $\widetilde{\bar{\nabla}}$. Let $\nabla$ be the induced connection on $M$ from the connection $\bar{\nabla}$
and $\widetilde{\nabla}$ be the induced connection on $M$ from the connection $\widetilde{\bar{\nabla}}$.

Let $h$ and $\widetilde{h}$ be the second fundamental form with respect to Levi-Civita connection and semisymmetric  metric connections
respectively. Then we have:
\begin{equation}\label{eqn3.13}
\widetilde{\bar{\nabla}}_{X}Y= \widetilde{\nabla}_{X}Y + \widetilde{h}(X,Y).
\end{equation}
By virtue of (\ref{eqn2.7}) and (\ref{eqn2.12}), (\ref{eqn3.13}) yields
\begin{equation}\label{eqn3.14}
\widetilde{\nabla}_{X}Y+\widetilde{h}(X,Y)=\nabla_{X}Y+h(X,Y)+\eta(Y)X-g(X,Y)\xi.
\end{equation}
Since $X,\xi \in \Gamma(TM)$, by equating the tangential and normal components of (\ref{eqn3.14}) we get
\begin{equation}\label{eqn3.15}
\widetilde{\nabla}_{X}Y= \nabla_{X}Y+ \eta(Y)X-g(X,Y)\xi
\end{equation}
and
\begin{equation}\label{eqn3.16}
\widetilde{h}(X,Y)= h(X,Y).
\end{equation}
This leads to the following:
\begin{theorem}
Let $M$ be a submanifold of a SQ-Sasakian manifold $M^*$ with respect to semisymmetric metric connection.
Then\\(i) $M$ admits semisymmetric metric connection, \\(ii) The second fundamental forms with respect to Levi-Civita connection
and semisymmetric connection are equal.
\end{theorem}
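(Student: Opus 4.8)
The plan is to read off both conclusions directly from the two component equations \eqref{eqn3.15} and \eqref{eqn3.16}, which were already derived by splitting \eqref{eqn3.14} into its tangential and normal parts. Both assertions are essentially bookkeeping once that split is in hand, so the work is to verify that the induced objects satisfy the defining properties rather than to produce any new computation.

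For part (i), I would argue that the connection $\widetilde{\nabla}$ induced on $M$ is itself a semisymmetric metric connection. First I would compute its torsion: using \eqref{eqn3.15}, one has $\widetilde{\nabla}_X Y - \widetilde{\nabla}_Y X - [X,Y] = (\nabla_X Y - \nabla_Y X - [X,Y]) + \eta(Y)X - \eta(X)Y - \big(g(X,Y)-g(Y,X)\big)\xi$. Since $\nabla$ is the Levi-Civita connection of the induced metric on $M$ it is torsion-free, and the symmetric metric term cancels, leaving torsion $\eta(Y)X - \eta(X)Y$, which is exactly the semisymmetric form required by \eqref{eqn2.11}. Next I would check the metric-compatibility condition $(\widetilde{\nabla}_X g)(Y,Z)=0$ on $M$; expanding $(\widetilde{\nabla}_X g)(Y,Z) = Xg(Y,Z) - g(\widetilde{\nabla}_X Y, Z) - g(Y, \widetilde{\nabla}_X Z)$ and substituting \eqref{eqn3.15}, the Levi-Civita part vanishes and the remaining correction terms cancel upon using $g(X,\xi)=\eta(X)$ from \eqref{eqn1.2}. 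Together these two verifications establish that $\widetilde{\nabla}$ is a semisymmetric metric connection on $M$, proving (i).

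Part (ii) is immediate from \eqref{eqn3.16}: the equation $\widetilde{h}(X,Y)=h(X,Y)$ obtained by equating the normal components of \eqref{eqn3.14} is precisely the assertion that the two second fundamental forms coincide. The only point worth emphasizing is why the normal component of the correction term $\eta(Y)X - g(X,Y)\xi$ in \eqref{eqn3.14} is zero: because $M$ is invariant, $\xi$ is tangent to $M$, so both $X$ and $\xi$ lie in $\Gamma(TM)$ and the entire correction is tangential, contributing nothing to the normal part. This tangency of $\xi$ is the one structural hypothesis doing the real work here.

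The main obstacle—such as it is—is not any calculation but making sure the metric-compatibility check in (i) is carried out with the correct induced metric and that the cancellation genuinely uses $g(X,\xi)=\eta(X)$ rather than being assumed; I would present that one line explicitly and leave the torsion computation, which is fully routine, to the reader.
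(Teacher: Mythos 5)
Your proposal is correct and follows essentially the same route as the paper: both split equation (\ref{eqn3.14}) into tangential and normal parts to obtain (\ref{eqn3.15}) and (\ref{eqn3.16}), with (ii) read off immediately from (\ref{eqn3.16}) and the tangency of $\xi$ (from the invariance hypothesis) justifying the split. The only difference is that you explicitly verify the torsion condition and metric compatibility of the induced connection $\widetilde{\nabla}$, which the paper leaves implicit by noting that (\ref{eqn3.15}) has the same form as (\ref{eqn2.12}); this is a welcome fleshing-out, not a different argument.
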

Now we define the following:
\begin{definition}\cite{HUI}
  A submanifold $M$ of a SQ-Sasakian $M^*$ with respect to semisymmetric metric connection is called
  Chaki-pseudo parallel if its second fundamental form $\widetilde{h}$ satisfies
  \begin{equation*}
   (\widetilde{\nabla}_{X}\widetilde{h})(Y,Z)= 2\alpha(X)\widetilde{h}(Y,Z)+\alpha(Y)\widetilde{h}(X,Z)+
   \alpha(Z)\widetilde{h}(X,Y)
\end{equation*}
for all $X,Y,Z$ on $M$.
\end{definition}
Let us take $M$ be an invariant submanifold of a SQ-Sasakian manifold $M^*$ with respect to semisymmetric
metric connection. Suppose that $M$ is Chaki-pseudo parallel with respect to semisymmetric metric connection.
Then we have
\begin{equation}\label{eqn3.17}
(\widetilde{\nabla}_{X}h)(Y,Z)= 2\alpha(X)h(Y,Z)+\alpha(Y)h(X,Z)+\alpha(Z)h(X,Y).
\end{equation}
In view of (\ref{eqn3.2}) and (\ref{eqn3.15}) we have from (\ref{eqn3.17}) that
\begin{align*}
  &(\nabla_{X}h)(Y,Z)+g(h(Y,Z),\xi) -g(X,h(Y,Z))\xi& \\
  &- \eta(Y)h(X,Z)- \eta(Z)h(X,Y)& \\
   &= 2\alpha(X)h(Y,Z)+\alpha(Y)h(X,Z)+\alpha(Z)h(X,Y)
\end{align*}
i.e.,
\begin{align}\label{eqn3.18}
  &\nabla^\bot_{X}h(Y,Z)-h(\nabla_{X}Y,Z) +h(Y,\nabla_{X}Z)& \\
\nonumber  &+  g(h(Y,Z),\xi)-g(X,h(Y,Z))\xi& \\
\nonumber  &- \eta(Y)h(X,Z)-\eta(Z)h(X,Y)& \\
\nonumber  &=  2\alpha(X)h(Y,Z)+\alpha(Y)h(X,Z)+\alpha(Z)h(X,Y).
\end{align}
Putting $Z=\xi$ in (\ref{eqn3.18}) and using (\ref{eqn3.2}) we get
\begin{equation}\label{eqn3.19}
-h(Y,\nabla_{X}\xi)-h(X,Y)= \alpha(\xi)h(X,Y).
\end{equation}
By virtue of (\ref{eqn3.1}), we have from (\ref{eqn3.19}) that
\begin{equation}\label{eqn3.20}
\beta h(Y,\phi X)-\{\alpha(\xi)+1\}h(X,Y)=0.
\end{equation}
Replacing $X$ by $\phi X$ in (\ref{eqn3.20}) and using (\ref{eqn1.1}) and (\ref{eqn3.2}) we get
\begin{equation}\label{eqn3.21}
\beta h(Y,X)+\{\alpha(\xi)+1\}h(\phi X,Y)=0.
\end{equation}
From (\ref{eqn3.20}) and (\ref{eqn3.21}) we get
\begin{equation*}
[\{\alpha(\xi)+1\}^2+\beta^2]h(X,Y)=0,
\end{equation*}
which implies that $h(X,Y)=0$ provided $\{\alpha(\xi)+1\}^2+\beta^2\neq0$. Thus we can state the following:
\begin{theorem}
Let $M$ be an invariant submanifold of a SQ-Sasakian manifold $M^*$ with respect to semisymmetric metric connection.
Then $M$ is totally geodesic if and only if $M$ is Chaki-pseudo parallel with respect to semisymmetric metric connection,
provided $\{\alpha(\xi)+1\}^2+\beta^2\neq0$.
\end{theorem}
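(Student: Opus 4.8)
The plan is to mirror the argument used for Theorem 3.1, but now keeping track of the extra terms produced by the semisymmetric metric connection. The starting point is the observation that, by (\ref{eqn3.16}), the second fundamental forms coincide ($\widetilde{h}=h$), so the Chaki-pseudo parallel hypothesis for $\widetilde{h}$ is exactly the relation (\ref{eqn3.17}). First I would expand the left-hand side of (\ref{eqn3.17}) by substituting the relation (\ref{eqn3.15}) between $\widetilde{\nabla}$ and $\nabla$; the difference terms $\eta(Y)X-g(X,Y)\xi$ feed into $h(\widetilde{\nabla}_XY,Z)$ and $h(Y,\widetilde{\nabla}_XZ)$ and, together with $h(X,\xi)=0$ from (\ref{eqn3.2}), produce the enlarged identity (\ref{eqn3.18}).

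The decisive specialization is to set $Z=\xi$. Using $h(X,\xi)=0$ repeatedly kills most of the new terms and, after inserting $\nabla_X\xi=-\beta\phi X$ from (\ref{eqn3.1}), collapses the identity to (\ref{eqn3.20}), namely $\beta h(Y,\phi X)-\{\alpha(\xi)+1\}h(X,Y)=0$. The crucial point, and the place where this proof diverges from Theorem 3.1, is that the connection difference contributes one extra $h(X,Y)$ term; this is exactly what promotes the coefficient $\alpha(\xi)$ to $\alpha(\xi)+1$, and getting this bookkeeping right is the main thing to be careful about.

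Once (\ref{eqn3.20}) is in hand the rest is formal. I would replace $X$ by $\phi X$, use $\phi^2X=-X+\eta(X)\xi$ from (\ref{eqn1.1}) together with $h(Y,\xi)=0$ to rewrite $h(Y,\phi^2X)=-h(Y,X)$, obtaining the companion relation (\ref{eqn3.21}). Eliminating the off-diagonal term $h(\phi X,Y)$ between (\ref{eqn3.20}) and (\ref{eqn3.21}), for instance by multiplying (\ref{eqn3.20}) through by $\{\alpha(\xi)+1\}$ and then substituting (\ref{eqn3.21}), yields $[\{\alpha(\xi)+1\}^2+\beta^2]h(X,Y)=0$. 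Since the scalar factor is assumed nonzero, $h\equiv0$ and $M$ is totally geodesic. The converse is immediate: if $M$ is totally geodesic then $h=0$, so every term in the defining relation vanishes and $M$ is trivially Chaki-pseudo parallel with respect to the semisymmetric metric connection.
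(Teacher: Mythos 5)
Your proposal is correct and follows essentially the same route as the paper's own proof: you expand the Chaki-pseudo parallel condition (\ref{eqn3.17}) using $\widetilde{h}=h$ from (\ref{eqn3.16}) and the connection relation (\ref{eqn3.15}), specialize to $Z=\xi$ with (\ref{eqn3.1}) and (\ref{eqn3.2}) to reach $\beta h(Y,\phi X)-\{\alpha(\xi)+1\}h(X,Y)=0$, then substitute $X\mapsto\phi X$ and eliminate to get $[\{\alpha(\xi)+1\}^2+\beta^2]h(X,Y)=0$, exactly as in the paper. Your bookkeeping of the extra $h(X,Y)$ term (coming from $-\eta(Z)h(X,Y)$ at $Z=\xi$, which promotes $\alpha(\xi)$ to $\alpha(\xi)+1$) matches the paper's derivation of (\ref{eqn3.19}) and (\ref{eqn3.20}), and the trivial converse is handled identically.
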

\begin{corollary}
Let $M$ be an invariant submanifold of a SQ-Sasakian manifold $M^*$ with respect to semisymmetric metric connection.
Then $M$ is totally geodesic if and only if $M$ is parallel with respect to semisymmetric connection, provided $\beta^2+1\neq0$
\end{corollary}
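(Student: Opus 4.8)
The plan is to obtain this statement as an immediate specialization of Theorem 3.5. By the convention recorded just after the Chaki-pseudo parallel definition, $M$ being \emph{parallel} with respect to the semisymmetric metric connection means precisely that the defining $1$-form $\alpha$ in the semisymmetric version of Definition 3.3 vanishes identically, i.e. $(\widetilde{\nabla}_X\widetilde{h})(Y,Z)=0$ for all $X,Y,Z$ on $M$. Thus the whole task reduces to reading off what Theorem 3.5 asserts in the case $\alpha\equiv 0$.

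First I would substitute $\alpha\equiv 0$ into the hypothesis of Theorem 3.5. Then $\alpha(\xi)=0$, so the quantity $\{\alpha(\xi)+1\}^2+\beta^2$ becomes $(0+1)^2+\beta^2=\beta^2+1$. Hence the nondegeneracy condition of Theorem 3.5 collapses exactly to the stated condition $\beta^2+1\neq 0$, and the equivalence ``totally geodesic if and only if Chaki-pseudo parallel'' specializes to ``totally geodesic if and only if parallel.'' This already yields the corollary.

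For completeness I would also verify the forward direction by a direct computation mirroring the proof of Theorem 3.5. Using $\widetilde{h}=h$ from Theorem 3.4(ii), the parallel condition reads $(\widetilde{\nabla}_X h)(Y,Z)=0$. Running through (\ref{eqn3.17})--(\ref{eqn3.18}) with the right-hand side set to zero, putting $Z=\xi$, and invoking (\ref{eqn3.1}) and (\ref{eqn3.2}), I would arrive at
\[
\beta\,h(Y,\phi X)-h(X,Y)=0,
\]
which is (\ref{eqn3.20}) with $\alpha(\xi)=0$. Replacing $X$ by $\phi X$ and using (\ref{eqn1.1}) and (\ref{eqn3.2}) gives the companion relation $\beta\,h(X,Y)+h(\phi X,Y)=0$. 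Eliminating the mixed term $h(Y,\phi X)$ between the two relations produces $(\beta^2+1)\,h(X,Y)=0$, and since $\beta^2+1\neq 0$ we conclude $h\equiv 0$, i.e. $M$ is totally geodesic.

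The converse is immediate: a totally geodesic submanifold has $h\equiv 0$, which is trivially parallel in every connection. I do not anticipate any genuine obstacle here; the only point requiring a little care is the identification of ``parallel with respect to the semisymmetric connection'' with the $\alpha\equiv 0$ instance of the semisymmetric Chaki-pseudo parallel condition, together with the transfer $\widetilde{h}=h$ that allows the entire computation to be carried out in terms of the Levi-Civita-induced objects.
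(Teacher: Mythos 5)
Your proposal is correct and follows essentially the paper's own route: the corollary appears there with no separate proof precisely because it is the $\alpha\equiv 0$ specialization of Theorem 3.5 (via the stated convention that parallel means Chaki-pseudo parallel with vanishing $\alpha$, together with $\widetilde{h}=h$ from Theorem 3.4), and your direct verification merely re-runs the paper's computation (3.17)--(3.21) in that special case. The one small caveat --- Definition 3.1 formally requires $\alpha$ to be nowhere vanishing, so $\alpha\equiv 0$ is not literally an instance of the theorem's hypothesis --- is harmlessly disposed of by your standalone computation, which never uses that requirement.
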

\begin{definition}\cite{HUI}
A submanifold $M$ of a SQ-Sasakian manifold $M^*$ with respect to semisymmetric metric connection is
said to be Deszcz-pseudo parallel with respect to semisymmetric metric connection if
\begin{equation}\label{eqn3.22}
\widetilde{\bar{R}}(X,Y)\cdot\widetilde{h}= L_{\widetilde{h}}Q(g,\widetilde{h}),
\end{equation}
where $L_{\widetilde{h}}=L_h$ is defined in (\ref{eqn3.10}) and $\widetilde{\bar{R}}$ is the curvature
tensor of $\bar{M}$ for all $X,Y \in \Gamma(TM)$. In particular, if $L_h=0$ then $M$ is said to be
semiparallel with respect to semisymmetric metric connection.
\end{definition}
We now prove the following:
\begin{theorem}
Let $M$ be an invariant submanifold of a SQ-Sasakian manifold $M^*$ with respect to semisymmetric  metric connection.
Then $M$ is totally geodesic if and only if $M$ is Deszcz-pseudo parallel with respect to semisymmetric
metric connection, provided $(L_h+\beta^2)^2+\beta^2\neq0$.
\end{theorem}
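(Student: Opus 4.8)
The plan is to run the same argument as in the proof of Theorem 3.2, but with the Levi-Civita curvature $\bar R$ replaced by the semisymmetric curvature $\widetilde{\bar R}$ throughout; the extra torsion terms produce a surviving $\phi$-term that forces a second, ``$\phi$-rotated'' equation and thereby the quadratic hypothesis $(L_h+\beta^2)^2+\beta^2\neq0$. Assuming $M$ is Deszcz-pseudo parallel with respect to the semisymmetric metric connection, I would first expand (\ref{eqn3.22}) exactly as (\ref{eqn3.10}) was expanded into (\ref{eqn3.11}), namely
\begin{align*}
&\widetilde R^\bot(X,Y)h(Z,U)-h(\widetilde R(X,Y)Z,U)-h(Z,\widetilde R(X,Y)U)\\
&=L_h[g(Y,Z)h(X,U)-g(X,Z)h(Y,U)+g(Y,U)h(X,Z)-g(X,U)h(Y,Z)],
\end{align*}
where $\widetilde R$ and $\widetilde R^\bot$ are the induced tangential and normal curvatures of the semisymmetric connection and $\widetilde h=h$ by (\ref{eqn3.16}). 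Setting $X=U=\xi$ and using $h(\cdot,\xi)=0$ from (\ref{eqn3.2}) kills every term except one on each side, giving the analogue of (\ref{eqn3.12}),
\begin{equation*}
h(Z,\widetilde R(\xi,Y)\xi)=L_h\,h(Y,Z).
\end{equation*}

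The decisive input is the value of $\widetilde R(\xi,Y)\xi$. I would obtain it from (\ref{eqn2.16}) by putting $Z=\xi$: since $\Phi(Y,\xi)=g(Y,\phi\xi)=0$ and $(\xi\beta)=0$, the term $\Phi(Y,\xi)\{grad\beta-\beta\xi\}$ drops out and one is left with
\begin{equation*}
\widetilde{\bar R}(\xi,Y)\xi=\beta^2\{\eta(Y)\xi-Y\}-\beta\,\phi Y.
\end{equation*}
Because $\xi,Y,\phi Y$ are all tangent to the invariant submanifold $M$, this vector is tangential, so the Gauss equation for the semisymmetric connection (the analogue of (\ref{eqn2.10}), whose shape-operator terms vanish at $Z=\xi$ since $\widetilde h(\cdot,\xi)=h(\cdot,\xi)=0$) identifies the induced curvature with the ambient one, $\widetilde R(\xi,Y)\xi=\widetilde{\bar R}(\xi,Y)\xi$. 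Substituting this and again using $h(\cdot,\xi)=0$ together with the symmetry of $h$ gives
\begin{equation}\label{eqnplanA}
(L_h+\beta^2)h(Y,Z)+\beta\,h(\phi Y,Z)=0.
\end{equation}

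Now I would replace $Y$ by $\phi Y$ in (\ref{eqnplanA}) and use $\phi^2Y=-Y+\eta(Y)\xi$ from (\ref{eqn1.1}) together with $h(\xi,Z)=0$, which turns $h(\phi^2Y,Z)$ into $-h(Y,Z)$ and yields
\begin{equation}\label{eqnplanB}
(L_h+\beta^2)h(\phi Y,Z)-\beta\,h(Y,Z)=0.
\end{equation}
Viewing (\ref{eqnplanA}) and (\ref{eqnplanB}) as a homogeneous linear system in the unknowns $h(Y,Z)$ and $h(\phi Y,Z)$, its coefficient determinant is $(L_h+\beta^2)^2+\beta^2$; under the hypothesis this is nonzero, forcing $h(Y,Z)=0$ for all $Y,Z$, so $M$ is totally geodesic. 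The converse is immediate, since a totally geodesic $M$ has $h=0$ and both sides of (\ref{eqn3.22}) then vanish.

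The step I expect to be the main obstacle is justifying $\widetilde R(\xi,Y)\xi=\widetilde{\bar R}(\xi,Y)\xi$: one must verify that the semisymmetric Gauss equation has vanishing shape-operator and normal-derivative corrections at $Z=\xi$, which rests on $\widetilde h=h$ and $h(X,\xi)=0$. Compared with the Levi-Civita case (Theorem 3.2), the novelty, and the reason the hypothesis becomes quadratic, is the surviving term $-\beta\,\phi Y$ in $\widetilde R(\xi,Y)\xi$, which prevents (\ref{eqnplanA}) alone from being conclusive and necessitates the $\phi$-rotation producing (\ref{eqnplanB}).
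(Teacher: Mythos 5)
Your proposal is correct and follows essentially the same route as the paper's proof of Theorem 3.6: substituting $X=U=\xi$ into the expanded pseudo-parallelity condition to get $h(Z,\widetilde{R}(\xi,Y)\xi)=L_h\,h(Y,Z)$, identifying $\widetilde{R}(\xi,Y)\xi=\widetilde{\bar{R}}(\xi,Y)\xi=\beta^2\{\eta(Y)\xi-Y\}-\beta\phi Y$ via $h(\cdot,\xi)=0$ and the Gauss equation, and then $\phi$-rotating to obtain the paper's equations (\ref{eqn3.26})--(\ref{eqn3.27}) and the determinant $(L_h+\beta^2)^2+\beta^2$. The only cosmetic difference is that you evaluate the ambient curvature from (\ref{eqn2.16}) at $Z=\xi$ while the paper uses (\ref{eqn2.15}) at $X=\xi$ (its citation of (\ref{eqn2.14}) there is a typo), and you spell out the Gauss-equation justification that the paper merely asserts.
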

\begin{proof}
Let $M$ be an invariant submanifold of a SQ-Sasakian manifold $M^*$ with respect to semisymmetric
metric connection. Suppose that $M$ is Deszcz-pseudo parallel with respect to semisymmetric metric
connection. Then we have from (\ref{eqn3.22}) that
\begin{equation*}
 \widetilde{\bar{R}}(X,Y)\cdot h=L_hQ(g,h),
\end{equation*}
i.e.,

\begin{align}\label{eqn3.23}
   & \widetilde{R}^\bot(X,Y)h(Z,U)-h(\widetilde{R}(X,Y)Z,U)-h(Z,\widetilde{R}(X,Y)U) \\
  \nonumber & =L_h[g(Y,Z)h(X,U)-g(X,Z)h(Y,U)+g(Y,U)h(X,Z) \\
  \nonumber & -g(X,U)h(Y,Z)].
\end{align}
Putting $X=U=\xi$ in (\ref{eqn3.23}) and using (\ref{eqn3.2}), we get
\begin{equation}\label{eqn3.24}
  h(Z,\widetilde{R}(\xi,Y)\xi)=L_{h}h(Y,Z).
\end{equation}
By virtue of (\ref{eqn3.2}) and Gauss equation we have from (\ref{eqn2.14}) that
\begin{eqnarray}
\label{eqn3.25}
  \widetilde{R}(X,Y)\xi &=& \widetilde{\bar{R}}(X,Y)\xi\\
  \nonumber &=& \beta^2\{\eta(Y)X-\eta(X)Y\}+\{\beta\eta(Y)+(Y\beta)\}\phi X \\
  \nonumber &-& \{\beta\eta(X)+(X\beta)\}\phi Y.
\end{eqnarray}
In view of (\ref{eqn3.2}) and (\ref{eqn3.25}), (\ref{eqn3.24}) yields
\begin{equation}\label{eqn3.26}
  (L_h+\beta^2)h(Z,Y)+ \beta h(Z,\phi Y)=0.
\end{equation}
Replacing $Y$ by $\phi Y$ in (\ref{eqn3.26}) and using (\ref{eqn1.1}) and (\ref{eqn3.2}) we get
\begin{equation}\label{eqn3.27}
  (L_h+\beta^2)h(Z,\phi Y)- \beta h(Z,Y)=0.
\end{equation}
From (\ref{eqn3.26}) and (\ref{eqn3.27}) we get $[(L_h+\beta^2)^2+\beta^2]h(Y,Z)=0$, which implies that
$h(Y,Z)=0$, as $(L_h+\beta^2)^2+\beta^2\neq0$, i.e. $M$ is totally geodesic.
\end{proof}
\begin{corollary}
  Let $M$ be an invariant submanifold of a SQ-Sasakian manifold $M^*$ with respect to semisymmetric metric
  connection. Then $M$ is totally geodesic with respect to semisymmetric metric connection, provided $\beta^2+1 \neq 0$.
\end{corollary}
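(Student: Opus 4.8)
The plan is to read this corollary as the semiparallel specialization (the $L_h=0$ case) of the immediately preceding Deszcz-pseudo parallel theorem for the semisymmetric metric connection, in exact analogy with the way the earlier semiparallel corollary is the $L_h=0$ specialization of its Levi-Civita counterpart. With that reading the target is a one-directional implication: a semiparallel invariant submanifold (with respect to the semisymmetric metric connection) is totally geodesic. Taken completely literally the statement omits the semiparallel hypothesis and is false, since a generic invariant submanifold is not totally geodesic; so the substantive content is precisely the $L_h=0$ specialization, and the proof is an immediate substitution into the identity already produced in the theorem's proof.

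First I would recall that master identity. Combining (\ref{eqn3.26}) and (\ref{eqn3.27}) --- obtained by setting $X=U=\xi$ in the expanded Deszcz-pseudo parallel relation (\ref{eqn3.23}), using $h(X,\xi)=0$ from (\ref{eqn3.2}) and the expression (\ref{eqn3.25}) for $\widetilde{R}(\xi,Y)\xi$ --- and eliminating the $h(Z,\phi Y)$ term gives
\begin{equation*}
[(L_h+\beta^2)^2+\beta^2]\,h(Y,Z)=0
\end{equation*}
for all $Y,Z$ tangent to $M$. The only new step is to insert the semiparallel value $L_h=0$, which collapses the bracket to $\beta^2(\beta^2+1)$ and yields $\beta^2(\beta^2+1)\,h(Y,Z)=0$. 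Hence $h\equiv0$ whenever this coefficient is nonzero, and then $\widetilde{h}=h=0$ by (\ref{eqn3.16}), so $M$ is totally geodesic with respect to the semisymmetric metric connection.

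The step I expect to be the main obstacle is interpretive rather than computational. Over the reals one always has $\beta^2+1\ge 1>0$, so the printed proviso $\beta^2+1\neq0$ is vacuous; the genuine nondegeneracy extracted from the identity is $\beta^2(\beta^2+1)\neq0$, i.e. $\beta\neq0$ (equivalently, the ambient manifold is not cosymplectic). Thus the honest work is to recognize that the corollary must carry the suppressed semiparallel hypothesis and that its effective hypothesis is $\beta\neq0$ rather than the stated $\beta^2+1\neq0$; once those two readings are fixed, the conclusion follows by the one-line substitution above.
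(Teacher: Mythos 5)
Your proposal is correct and coincides with the paper's (implicit) proof: the corollary is given without proof as the $L_h=0$ specialization of Theorem 3.6, exactly as you argue, and the paper's Theorem 3.7(iii) confirms that the suppressed hypothesis is indeed ``$M$ is semiparallel with respect to the semisymmetric metric connection.'' Your additional observation that the substitution actually yields $\beta^2(\beta^2+1)h(Y,Z)=0$, so that the effective proviso is $\beta\neq0$ while the printed condition $\beta^2+1\neq0$ is vacuous over $\mathbb{R}$, is an accurate diagnosis of a defect in the paper's statement rather than a gap in your argument.
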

From corollary $3.7$, Corollary $3.8$, Theorem $3.5$ and Theorem $3.6$, we can state the following:
\begin{theorem}
  Let $M$ be an invariant submanifold of a SQ-Sasakian manifold $M^*$ with respect to semisymmetric connection.
  Then the following statements are equivalent:\\
  (i) $M$ is totally geodesic, \\ (ii) $M$ is parallel with respect to semisymmetric metric connection with
  $\beta^2+1 \neq 0$,\\ (iii) $M$ is semiparallel with respect to semisymmetric metric connection with
  $\beta^2+1 \neq 0$,\\ (iv) $M$ is Chaki-pseudo parallel admitting semisymmetric metric connection with
  $\{\alpha(\xi)+1\}^2+\beta^2 \neq 0$,\\ (v) $M$ is Deszcz-pseudo parallel admitting semisymmetric metric connection
  with $\{(L_h+\beta^2)^2+\beta^2 \neq 0\}$.
\end{theorem}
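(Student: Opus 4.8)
The plan is to recognize that this theorem is a bookkeeping consolidation of the four equivalences already established for the semisymmetric metric connection, rather than a result demanding fresh computation. Each of the statements (ii)--(v) has already been shown, one at a time, to be equivalent to the total geodesy condition (i). Concretely, Corollary 3.7 supplies (i) $\Leftrightarrow$ (ii), Corollary 3.8 supplies (i) $\Leftrightarrow$ (iii), Theorem 3.5 supplies (i) $\Leftrightarrow$ (iv), and Theorem 3.6 supplies (i) $\Leftrightarrow$ (v). My first step, therefore, is simply to cite these four results in turn and record the four biconditionals they produce.

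Having assembled these, the second step is to close the chain by transitivity of logical equivalence: since every one of (ii), (iii), (iv), (v) is equivalent to the single statement (i), any two of them are equivalent to each other. This immediately yields the mutual equivalence of all five statements, with no intermediate implication left to verify directly. In effect (i) plays the role of a hub to which each of the remaining conditions is separately pinned, so no genuine ``cyclic'' chain of implications needs to be constructed.

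The one point that requires care, and which I would flag as the only genuine obstacle, is the handling of the side conditions. Each of the four borrowed equivalences is valid only under its own nondegeneracy hypothesis: $\beta^2+1\neq 0$ for the parallel and semiparallel cases, $\{\alpha(\xi)+1\}^2+\beta^2\neq 0$ for the Chaki-pseudo parallel case, and $(L_h+\beta^2)^2+\beta^2\neq 0$ for the Deszcz-pseudo parallel case. Consequently the equivalences must be stated as holding within the appropriate admissible regime, and the combined theorem must carry all of these qualifications along exactly as listed in (ii)--(v). Provided these hypotheses are attached to their respective statements, no computation beyond the four cited results is needed, and the proof reduces entirely to the transitivity argument above.
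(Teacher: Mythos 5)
Your proposal is correct and matches the paper exactly: the paper offers no computation for this theorem, stating only that it follows ``From corollary 3.7, Corollary 3.8, Theorem 3.5 and Theorem 3.6,'' which is precisely your hub-and-transitivity argument with (i) as the pivot and the side conditions carried along verbatim. Nothing further is needed.
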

\maketitle
\section{Ricci Solitons on invariant submanifolds}
Let us take $(g,\xi,\lambda)$ be a Ricci soliton on an invariant submanifold $M$ of a SQ-Sasakian manifold $M^*$.
Then we have
\begin{equation}\label{eqn4.1}
  (\pounds_{\xi}g)(Y,Z)+ 2S(Y,Z)+ 2\lambda g(Y,Z)=0.
\end{equation}
From (\ref{eqn3.1}) we get
\begin{eqnarray}
\label{eqn4.2}
  (\pounds_{\xi}g)(Y,Z) &=& g(\nabla_{Y}\xi,Z)+g(Y,\nabla_{Z}\xi) \\
  \nonumber &=& -\beta[g(\phi Y,Z)+g(Y,\phi Z)]  \\
  \nonumber &=& 0.
\end{eqnarray}
Using (\ref{eqn4.2}) in (\ref{eqn4.1}) we get
\begin{equation}\label{eqn4.3}
  S(Y,Z)=-\lambda g(Y,Z).
\end{equation}
This leads to the following:
\begin{theorem}
  If $(g,\xi,\lambda)$ is a Ricci soliton on an invariant submanifold $M$ of a SQ-Sasakian manifold $M^*$
  then $M$ is Einstein.
\end{theorem}
From (\ref{eqn3.3}) we get
\begin{equation*}
  S(Y,\xi)=-((\phi Y)\beta)+ 2m\beta^2\eta(Y)
\end{equation*}
and hence
\begin{equation}\label{eqn4.4}
  S(\xi,\xi)=2m\beta^2.
\end{equation}
Also from (\ref{eqn4.3}) we get
\begin{equation}\label{eqn4.5}
  S(\xi,\xi)=-\lambda.
\end{equation}
Thus from (\ref{eqn4.4}) and (\ref{eqn4.5}) we obtain $\lambda= -2m\beta^2<0$. This leads to the following:
\begin{theorem}
  A Ricci soliton $(g,\xi,\lambda)$ on an invariant submanifold $M$ of a SQ-Sasakian manifold $M^*$ is always
  shrinking.
\end{theorem}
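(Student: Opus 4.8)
The plan is to extract a single scalar equation for $\lambda$ by evaluating the Einstein condition produced by the soliton at the structure vector field $\xi$, and then to read off the sign of $\lambda$ from the structure function $\beta$. First I would return to the defining soliton equation $(\ref{eqn4.1})$ and dispose of the Lie-derivative term: substituting $\nabla_Y\xi=-\beta\phi Y$ from $(\ref{eqn3.1})$ gives $(\pounds_\xi g)(Y,Z)=-\beta[g(\phi Y,Z)+g(Y,\phi Z)]$, and the skew-symmetry $g(\phi Y,Z)=-g(Y,\phi Z)$ recorded in $(\ref{eqn1.3})$ makes this vanish, so $\pounds_\xi g=0$ as in $(\ref{eqn4.2})$. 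This is the key simplification that turns the soliton metric into an Einstein metric.

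Feeding $\pounds_\xi g=0$ back into $(\ref{eqn4.1})$ yields $S(Y,Z)=-\lambda g(Y,Z)$, i.e. $(\ref{eqn4.3})$, so that $M$ is Einstein by Theorem $4.1$. Evaluating this at $Y=Z=\xi$ and using $g(\xi,\xi)=\eta(\xi)=1$ produces the first value $S(\xi,\xi)=-\lambda$, which is $(\ref{eqn4.5})$.

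Next I would compute $S(\xi,\xi)$ a second way, directly from the intrinsic curvature of $M$. Starting from the expression $(\ref{eqn3.3})$ for $R(X,Y)\xi$, I would contract over a suitable local orthonormal frame to obtain $S(Y,\xi)=-((\phi Y)\beta)+2m\beta^2\eta(Y)$; then setting $Y=\xi$ and using $\phi\xi=0$ and $\eta(\xi)=1$ from $(\ref{eqn1.1})$ collapses this to $S(\xi,\xi)=2m\beta^2$, which is $(\ref{eqn4.4})$. Comparing the two evaluations $(\ref{eqn4.4})$ and $(\ref{eqn4.5})$ then forces $\lambda=-2m\beta^2$.

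Finally, since $M^*$ is a genuine SQ-Sasakian manifold its structure function $\beta$ is nowhere zero (the case $\beta\equiv 0$ being exactly the cosymplectic degeneration noted after $(\ref{eqn1.5})$), so $\beta^2>0$ and hence $\lambda<0$; by the sign convention following $(\ref{eqn1.8})$ the soliton is therefore shrinking. The step that needs the most care is the tracing of $(\ref{eqn3.3})$ to reach $S(\cdot,\xi)$, since the resulting sign must match the Einstein value $-\lambda$ for the comparison to pin down a definite sign for $\lambda$; conceptually the only thing one must guard against is the borderline case $\beta=0$, which would give a steady rather than a shrinking soliton, and this is precisely what the SQ-Sasakian hypothesis excludes.
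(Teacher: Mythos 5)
Your proposal is correct and follows essentially the same route as the paper: both obtain $\pounds_{\xi}g=0$ from $\nabla_{Y}\xi=-\beta\phi Y$ and the skew-symmetry of $\phi$, deduce $S(Y,Z)=-\lambda g(Y,Z)$ and hence $S(\xi,\xi)=-\lambda$, trace (3.3) to get $S(Y,\xi)=-((\phi Y)\beta)+2m\beta^{2}\eta(Y)$ and so $S(\xi,\xi)=2m\beta^{2}$, and conclude $\lambda=-2m\beta^{2}<0$. The only difference is cosmetic: you explicitly flag the degenerate case $\beta=0$ (where the soliton would merely be steady), a point the paper passes over silently by asserting $-2m\beta^{2}<0$ outright.
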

Now we take $(g,\xi,\lambda)$ is a Ricci soliton on an invariant submanifold $M$ of a SQ-Sasakian manifold $M^*$
with respect to semisymmetric metric connection $\widetilde{\bar{\nabla}}$. Then we have
\begin{equation}\label{eqn4.6}
  (\widetilde{\pounds}_{\xi}g)(Y,Z)+ 2\widetilde{S}(Y,Z)+ 2\lambda g(Y,Z)=0.
\end{equation}
From (\ref{eqn3.15}) we get
\begin{equation*}
  \widetilde{\nabla}_{X}\xi=-\beta\phi X + X - \eta(X)\xi
\end{equation*}
and hence

\begin{eqnarray}
\label{eqn4.7}
 (\widetilde{\pounds}_{\xi}g)(Y,Z) &=& g(\widetilde{\nabla}_{Y}\xi,Z)+ g(Y,\widetilde{\nabla}_{Z}\xi) \\
\nonumber &=& 2[g(Y,Z)-\eta(Y)\eta(Z)].
\end{eqnarray}
Also for submanifold $M$ with respect to induced semisymmetric metric connection, we can calculate
\begin{equation}\label{eqn4.8}
  \widetilde{S}(Y,Z)= S(Y,Z)- (2m-1)\{g(Y,Z)-\eta(Y)\eta(Z)+\Phi(Y,Z)\}.
\end{equation}
By virtue of (\ref{eqn4.7}) and (\ref{eqn4.8}), (\ref{eqn4.6}) yields
\begin{equation}\label{eqn4.9}
  S(Y,Z)=(2m-\lambda+2)g(Y,Z)+ 2m\eta(Y)\eta(Z)+(2m-1)\Phi(Y,Z),
\end{equation}
which implies that $M$ is pseudo $\eta$-Einstein. This leads to the following:
\begin{theorem}
  If $(g,\xi,\lambda)$ is a Ricci soliton on an invariant submanifold $M$ of a SQ-Sasakian manifold $M^*$
  with respect to semisymmetric metric connection, then $M$ is pseudo $\eta$-Einstein.
\end{theorem}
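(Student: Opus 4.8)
The plan is to start directly from the soliton equation with respect to the semisymmetric metric connection, namely (\ref{eqn4.6}), and to reduce it to the pseudo $\eta$-Einstein form (\ref{eqn2.17}) by computing each of its three terms explicitly on $M$.

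First I would determine the induced semisymmetric metric connection applied to $\xi$. Combining (\ref{eqn3.15}) with (\ref{eqn3.1}) gives $\widetilde{\nabla}_X\xi = -\beta\phi X + X - \eta(X)\xi$. Feeding this into $(\widetilde{\pounds}_\xi g)(Y,Z) = g(\widetilde{\nabla}_Y\xi,Z) + g(Y,\widetilde{\nabla}_Z\xi)$, the terms involving $\beta\phi$ cancel exactly as in (\ref{eqn4.2}), since $g(\phi Y,Z) + g(Y,\phi Z) = 0$ by (\ref{eqn1.3}), while the remaining pieces combine to $2[g(Y,Z) - \eta(Y)\eta(Z)]$, which is (\ref{eqn4.7}).

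Next I would establish the submanifold analogue of (\ref{eqn2.14}), i.e. the relation (\ref{eqn4.8}) between the Ricci tensor $\widetilde{S}$ of the induced semisymmetric metric connection and the Ricci tensor $S$ of the induced Levi-Civita connection. Because the induced connection (\ref{eqn3.15}) has the same structural form as the ambient one (\ref{eqn2.12}), the curvature identity on $M$ mirrors (\ref{eqn2.13}) with $2n$ replaced by the submanifold dimension parameter $2m$; contracting over an orthonormal frame of $TM$ then produces the correction term $-(2m-1)\{g(Y,Z)-\eta(Y)\eta(Z)+\Phi(Y,Z)\}$. This is the step I expect to be the main obstacle: one must carefully track the dimension-dependent constant, use that the trace of $\phi$ vanishes, and check that the $\xi$-direction contributes precisely the expected terms, so that the coefficient comes out as $2m-1$ rather than $2n-1$.

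Finally, substituting (\ref{eqn4.7}) and (\ref{eqn4.8}) into (\ref{eqn4.6}) and solving for $S(Y,Z)$ yields (\ref{eqn4.9}), namely
\begin{equation*}
S(Y,Z)=(2m-\lambda+2)g(Y,Z)+ 2m\eta(Y)\eta(Z)+(2m-1)\Phi(Y,Z).
\end{equation*}
To conclude I would match this against (\ref{eqn2.17}) by reading off $p = 2m-\lambda+2$, $q = 2m$, $s = 2m-1$ and $D(Y,Z) = \Phi(Y,Z)$. It then remains only to verify the defining requirements of a pseudo $\eta$-Einstein manifold: $q = 2m \neq 0$ and $s = 2m-1 \neq 0$ for $m \geq 1$, and $D(Y,\xi) = \Phi(Y,\xi) = g(Y,\phi\xi) = 0$ by (\ref{eqn1.1}). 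Hence $S$ has the pseudo $\eta$-Einstein form and $M$ is pseudo $\eta$-Einstein.
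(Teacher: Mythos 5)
Your proposal is correct and takes essentially the same route as the paper: compute $\widetilde{\nabla}_X\xi$ from (3.15), deduce $(\widetilde{\pounds}_\xi g)(Y,Z)=2[g(Y,Z)-\eta(Y)\eta(Z)]$ as in (4.7), invoke the submanifold analogue (4.8) of (2.14), and substitute into the soliton equation (4.6) to arrive at (4.9). Your extra verification of the defining conditions of (2.17), namely $q\neq 0$, $s\neq 0$ and $D(Y,\xi)=\Phi(Y,\xi)=g(Y,\phi\xi)=0$, only makes explicit what the paper leaves unstated.
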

\maketitle
\section{Anti-invariant submanifolds of SQ-Sasakian manifolds and Ricci Solitons}
A submanifold $M$ of a SQ-Sasakian manifold $M^*$ is said to be anti-invariant \cite{BEJ} if for any $X$ tangent
to $M$, $\phi X$ is normal to $M$, i.e. $\phi(TM)\subset T^\bot M$ at every point of $M$, where $T^\bot M$ is the normal bundle of $M$.

Let us take $M$ be an anti-invariant submanifold of a SQ-Sasakian manifold $M^*$ such that $\xi$ is tangent to $M$. Then the relation
(\ref{eqn3.4}) holds. Since $M$ is anti-invariant, so $\phi X \in T^\bot M$ for any $X \in TM$. So, equating the tangential and normal
components of (\ref{eqn3.4}) we obtain $\nabla_X\xi=0$ and $h(X,\xi)= -\beta\phi X$. This leads to the followng:
\begin{proposition}
  Let $M$ be an anti-invariant submanifold of a SQ-Sasakian manifold $M^*$ such that $\xi$ is tangent to $M$. Then the following
  relations hold:
  \begin{equation}\label{eqn5.1}
    \nabla_X\xi = 0,
  \end{equation}
\begin{equation}\label{eqn5.2}
h(X,\xi)= -\beta \phi X.
\end{equation}
\end{proposition}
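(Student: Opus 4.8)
The plan is to derive both identities from a single orthogonal decomposition of $\bar\nabla_X\xi$. First I would specialize the Gauss formula \eqref{eqn2.7} to the case $Y=\xi$, which is legitimate precisely because the hypothesis guarantees $\xi$ is tangent to $M$; this gives $\bar\nabla_X\xi=\nabla_X\xi+h(X,\xi)$. Combining this with the SQ-Sasakian structure equation \eqref{eqn1.4}, namely $\bar\nabla_X\xi=-\beta\phi X$, yields the single relation
\begin{equation*}
-\beta\phi X=\nabla_X\xi+h(X,\xi),
\end{equation*}
which is exactly \eqref{eqn3.4}.

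Next I would invoke the anti-invariance hypothesis. By definition, $\phi(TM)\subset T^\bot M$, so for every $X\in\Gamma(TM)$ the vector $\phi X$, and hence $-\beta\phi X$, lies entirely in the normal bundle $T^\bot M$. On the other side of the displayed relation, $\nabla_X\xi$ is the induced (tangential) connection and so lies in $\Gamma(TM)$, while $h(X,\xi)$ is the second fundamental form and so lies in $\Gamma(T^\bot M)$. Since $TM$ and $T^\bot M$ are orthogonal complements, the two identities follow by comparing components: the tangential part of the right-hand side must equal the tangential part of the left-hand side, which is zero, giving $\nabla_X\xi=0$; and the normal parts must agree, giving $h(X,\xi)=-\beta\phi X$.

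There is no substantive obstacle here: the entire argument is the orthogonal splitting $T_pM^*=T_pM\oplus T^\bot_pM$ applied to \eqref{eqn3.4}. The only points deserving a little care are the placement of the hypothesis that $\xi$ is tangent, which is needed both for $\nabla_X\xi$ to be defined and for the decomposition to be meaningful, and the contrast with the invariant case of Proposition 3.1: there $\phi X$ is tangential, so the normal part of \eqref{eqn3.4} forces $h(X,\xi)=0$, whereas here anti-invariance pushes $\phi X$ into the normal bundle and instead forces the tangential derivative $\nabla_X\xi$ to vanish. Recognizing that these two propositions are the same computation read through opposite decompositions is the conceptual content rather than a technical difficulty.
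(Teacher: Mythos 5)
Your proof is correct and follows essentially the same route as the paper: both derive $-\beta\phi X=\nabla_X\xi+h(X,\xi)$ from \eqref{eqn1.4} and the Gauss formula \eqref{eqn2.7}, then use anti-invariance to place $\phi X$ in $T^\bot M$ and split into tangential and normal components. Your added remarks on where the tangency of $\xi$ is used, and on the contrast with the invariant case, are accurate but not a departure from the paper's argument.
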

Next, we prove the following:
\begin{proposition}
  Let $M$ be an anti-invariant submanifold of a SQ-Sasakian manifold $M^*$ such that $\xi$ is normal to $M$. Then the following relations hold:
\begin{equation}\label{eqn5.3}
A_\xi X = 0,
\end{equation}
\begin{equation}\label{eqn5.4}
\nabla^\bot_X \xi= \phi X
\end{equation}
for all $X \in TM$.
\end{proposition}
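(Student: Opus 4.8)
The plan is to reduce everything to the single ambient identity $\bar{\nabla}_X\xi=-\beta\phi X$ of (\ref{eqn1.4}), read now through the Weingarten formula rather than the Gauss formula, since in this proposition $\xi$ has been placed in the normal bundle. First I would apply (\ref{eqn2.8}) to the normal vector field $\xi$, obtaining
\[
\bar{\nabla}_X\xi=-A_\xi X+\nabla^\bot_X\xi,
\]
which exhibits $-A_\xi X$ as the tangential part and $\nabla^\bot_X\xi$ as the normal part of $\bar{\nabla}_X\xi$ for each $X\in TM$.

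The decisive structural input is anti-invariance: for every $X\in TM$ one has $\phi X\in\Gamma(T^\bot M)$, so the vector $\bar{\nabla}_X\xi$, being proportional to $\phi X$ by (\ref{eqn1.4}), lies wholly in the normal bundle and has vanishing tangential component. I would therefore compare the two descriptions of $\bar{\nabla}_X\xi$ and split them into tangential and normal parts. The tangential split immediately forces $-A_\xi X=0$, which is (\ref{eqn5.3}). The normal split identifies $\nabla^\bot_X\xi$ with the $\phi X$-term supplied by the structure equation, giving (\ref{eqn5.4}). This is the precise normal-bundle analogue of the tangential calculation carried out in Proposition 5.1, with the roles there played by $\nabla_X\xi$ and $h(X,\xi)$ now taken over by $A_\xi X$ and $\nabla^\bot_X\xi$.

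The step I expect to demand the most care is the normal comparison that produces (\ref{eqn5.4}): one must keep the coefficient and sign arising from (\ref{eqn1.4}) and from the Weingarten convention (\ref{eqn2.8}) under control in order to land on precisely $\phi X$. As an independent consistency check I would verify $g(\nabla^\bot_X\xi,\xi)=0$ by differentiating $g(\xi,\xi)=\eta(\xi)=1$; this is compatible with (\ref{eqn5.4}) because $g(\phi X,\xi)=\eta(\phi X)=0$ by (\ref{eqn1.2}) and (\ref{eqn1.3}). Beyond this bookkeeping no further ingredient is required---neither the Gauss equation nor any curvature identity enters---so both relations fall out simultaneously from the single Weingarten decomposition.
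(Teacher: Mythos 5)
Your method is exactly the paper's: read $\bar{\nabla}_X\xi$ through the Weingarten formula (\ref{eqn2.8}) applied to the normal field $\xi$, feed in the structure equation (\ref{eqn1.4}), and use anti-invariance ($\phi X\in\Gamma(T^\bot M)$) to conclude that $\bar{\nabla}_X\xi=-\beta\phi X$ is purely normal, so the tangential comparison forces $A_\xi X=0$. That half of your argument is correct and coincides step for step with the paper's computation (\ref{eqn5.5}).

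The gap is in the normal comparison, precisely the step you flag as delicate: with (\ref{eqn1.4}) and the convention (\ref{eqn2.8}), equating normal components yields $\nabla^\bot_X\xi=-\beta\phi X$, \emph{not} $\phi X$. Your claim that careful bookkeeping lands on ``precisely $\phi X$'' is therefore false unless $\beta\equiv -1$, which is no part of the hypotheses (and would make the structure essentially Sasakian up to sign rather than a general SQ-Sasakian one). Your proposed consistency check $g(\nabla^\bot_X\xi,\xi)=0$, obtained by differentiating $g(\xi,\xi)=1$, cannot detect this discrepancy, since both $\phi X$ and $-\beta\phi X$ are orthogonal to $\xi$; it tests the direction but not the coefficient, which is exactly where the error sits. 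In fairness, the printed statement (\ref{eqn5.4}) itself appears to be a typographical slip in the paper: the paper's own display (\ref{eqn5.5}) also produces $-\beta\phi X$ on the normal side, and $\nabla^\bot_X\xi=-\beta\phi X$ is the version consistent with the companion relations $h(X,\xi)=-\beta\phi X$ in (\ref{eqn5.2}) and $\widetilde{\nabla}^\bot_X\xi=-\beta\phi X$ in (\ref{eqn5.10}). So your derivation of (\ref{eqn5.3}) stands, but the correct conclusion of the argument --- and what the proposition should assert --- is $\nabla^\bot_X\xi=-\beta\phi X$; a proof that genuinely tracks the coefficient cannot arrive at (\ref{eqn5.4}) as stated.
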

\begin{proof}
For any $X \in TM$ and $\xi \in T^\bot M$, we have from (\ref{eqn1.4}) and (\ref{eqn2.8}) that
\begin{eqnarray}
\label{eqn5.5}
  -\beta\phi X &=& \bar{\nabla}_X\xi \\
  \nonumber &=& -A_\xi X+ \nabla^\bot_X\xi.
\end{eqnarray}
Since $M$ is an anti-invariant submanfold of $M^*$, therefore $\phi X \in T^\bot M$ for any $X \in TM$. Hence equating the tangential and normal
components from both side of (\ref{eqn5.5}) we get (\ref{eqn5.3}) and (\ref{eqn5.4}).
\end{proof}
We now consider $M$ be an anti-invariant submanifold of a SQ-Sasakian manifold $M^*$ with respect to semisymmetric metric connection $\widetilde{\bar{\nabla}}$ such that $\xi$ is tangent to $M$. Then for any $X \in TM$, we have from (\ref{eqn1.4}), (\ref{eqn2.12}) and
(\ref{eqn3.13}) that
\begin{eqnarray}
\label{eqn5.6}
 \widetilde{\nabla}_X\xi + \widetilde{h}(X,\xi)&=& \widetilde{\bar{\nabla}}_X\xi \\
 \nonumber &=& \bar{\nabla}_X\xi+X-\eta(X)\xi \\
 \nonumber &=& -\beta\phi X+X-\eta (X)\xi,
\end{eqnarray}
where $\widetilde{h}$ is the second fundamental form of $M$ with respect to semisymmetric metric connection. Since
$M$ is an anti-invariant submanifold of $M^*$, equating the tangential and normal components of (\ref{eqn5.6}) we can state the following:
\begin{proposition}
Let $M$ be an anti-invariant submanifold of a SQ-Sasakian manifold $M^*$ with respect to semisymmetric connection such that $\xi$
 is tangent to $M$. Then the following relations hold:
\begin{equation}\label{eqn5.7}
\widetilde{\nabla}_X\xi=X-\eta(X)\xi,
\end{equation}
\begin{equation}\label{eqn5.8}
\widetilde{h}(X,\xi)= -\beta \phi X
\end{equation}
for any $X \in TM $.
\end{proposition}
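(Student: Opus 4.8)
The plan is to read off both identities directly from the decomposition already recorded in (\ref{eqn5.6}). Combining the Gauss formula (\ref{eqn3.13}) applied to the pair $(X,\xi)$ with the connection relation (\ref{eqn2.12}) and the structure equation (\ref{eqn1.4}) gives
\[
\widetilde{\nabla}_X\xi+\widetilde{h}(X,\xi)=\widetilde{\bar{\nabla}}_X\xi=-\beta\phi X+X-\eta(X)\xi,
\]
so the whole content of the proposition reduces to separating the right-hand side into its tangential and normal parts.

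The key structural step will be to exploit the anti-invariance hypothesis to force this separation. Since $\xi$ is tangent to $M$ and $X\in TM$, the vector $X-\eta(X)\xi$ lies in $TM$; on the other hand, anti-invariance means precisely that $\phi X\in T^\bot M$ for every $X\in TM$, so the term $-\beta\phi X$ lies in the normal bundle $T^\bot M$. In the Gauss decomposition $\widetilde{\bar{\nabla}}_X\xi=\widetilde{\nabla}_X\xi+\widetilde{h}(X,\xi)$ the first summand is tangential and the second normal, so I would equate tangential components to obtain (\ref{eqn5.7}) and normal components to obtain (\ref{eqn5.8}).

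There is essentially no serious obstacle here: the only point needing care is checking that $X-\eta(X)\xi$ has no normal component and that $-\beta\phi X$ has no tangential component, both of which are immediate from the hypotheses. It is instructive to contrast this with the invariant case treated in (\ref{eqn3.15})--(\ref{eqn3.16}), where $\phi X$ is tangent, so there the correction term feeds into $\widetilde{\nabla}_X\xi$ rather than into the second fundamental form; here the roles are interchanged, which is exactly what produces the nonvanishing $\widetilde{h}(X,\xi)=-\beta\phi X$ along $\xi$.
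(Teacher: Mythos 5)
Your proposal is correct and follows essentially the same route as the paper: the paper likewise derives the identity $\widetilde{\nabla}_X\xi+\widetilde{h}(X,\xi)=-\beta\phi X+X-\eta(X)\xi$ from (\ref{eqn1.4}), (\ref{eqn2.12}) and (\ref{eqn3.13}) (this is exactly its equation (\ref{eqn5.6})) and then uses anti-invariance, i.e.\ $\phi X\in T^\bot M$, to split into tangential and normal components, yielding (\ref{eqn5.7}) and (\ref{eqn5.8}).
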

If $M$ is an anti-invariant submanifold of a SQ-Sasakian manifold $M^*$ with respect to semisymmetric metric connection such that
$\xi$ is normal to $M$. Then for any $X \in TM$ we have from (\ref{eqn1.4}), (\ref{eqn2.8}) and (\ref{eqn2.12}) that
\begin{eqnarray*}
  -\widetilde{A}_\xi X+\widetilde{\nabla}^\bot_X \xi &=& \widetilde{\bar{\nabla}}_X \xi \\
  \nonumber &=& \bar{\nabla}_X \xi+X-\eta(X)\xi\\
  \nonumber &=& -\beta\phi X+X,
\end{eqnarray*}
from which equating the tangential and normal components of above equation, we can state the following:
\begin{proposition}
  Let $M$ be an anti-invariant submanifold of a SQ-Sasakian manifold $M^*$ with respect to semisymmetric metric connection such that $\xi$
  is normal to $M$. Then the following relations hold:
 \begin{equation}\label{eqn5.9}
 \widetilde{A}_\xi X= -X,
 \end{equation}
 \begin{equation}\label{eqn5.10}
 \widetilde{\nabla}^\bot_X\xi = -\beta \phi X
 \end{equation}
  for any $X \in TM $.

\end{proposition}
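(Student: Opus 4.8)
The plan is to imitate the computation already performed in the tangent-$\xi$ case, but now exploiting that $\xi$ belongs to the normal bundle. First I would apply the Weingarten formula (\ref{eqn2.8}) for the induced semisymmetric metric connection to the normal field $\xi$, writing $\widetilde{\bar{\nabla}}_X\xi = -\widetilde{A}_\xi X + \widetilde{\nabla}^\bot_X\xi$ for an arbitrary $X \in TM$, where the first summand is tangent to $M$ and the second is normal to $M$. This isolates exactly the two quantities $\widetilde{A}_\xi X$ and $\widetilde{\nabla}^\bot_X\xi$ whose values we must determine.

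Next I would expand the same expression through the ambient structure. Using the connection relation (\ref{eqn2.12}) together with (\ref{eqn1.4}), one obtains $\widetilde{\bar{\nabla}}_X\xi = \bar{\nabla}_X\xi + X - \eta(X)\xi = -\beta\phi X + X - \eta(X)\xi$. The decisive simplification, and the only place where the hypothesis ``$\xi$ normal to $M$'' is genuinely used, is that for $X \in TM$ we have $\eta(X) = g(X,\xi) = 0$, since $\xi \perp TM$; hence the term $-\eta(X)\xi$ drops out and the expression reduces to $-\beta\phi X + X$.

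Finally I would equate the two forms of $\widetilde{\bar{\nabla}}_X\xi$ and split along the orthogonal decomposition into tangent and normal bundles $TM \oplus T^\bot M$. Because $M$ is anti-invariant, $\phi X \in T^\bot M$, so $X$ is the unique tangential component of $-\beta\phi X + X$ while $-\beta\phi X$ is the normal component. Matching tangential parts yields $-\widetilde{A}_\xi X = X$, that is (\ref{eqn5.9}), and matching normal parts yields $\widetilde{\nabla}^\bot_X\xi = -\beta\phi X$, that is (\ref{eqn5.10}). There is essentially no analytic obstacle here: the entire argument is the algebraic splitting of a single identity, and the work lies only in correctly recording why $\eta(X)$ vanishes and why $\phi X$ lands in the normal bundle. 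Once these two observations are in place the two stated relations follow at once.
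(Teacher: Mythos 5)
Your argument is correct and coincides with the paper's own proof: both apply the Weingarten formula for the semisymmetric metric connection to $\xi$, expand $\widetilde{\bar{\nabla}}_X\xi$ via (\ref{eqn2.12}) and (\ref{eqn1.4}), and split into tangential and normal parts using anti-invariance. If anything, you are slightly more explicit than the paper in recording that $\eta(X)=g(X,\xi)=0$ for tangent $X$ when $\xi$ is normal, a step the paper performs silently.
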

 Let us take $(g,\xi,\lambda)$ be a Ricci soliton on an anti-invariant submanifold $M$ of a SQ-Sasakian manifold $M^*$
  such that $\xi$ is tangent to $M$. Then we have the  relation (\ref{eqn4.1}). From (\ref{eqn5.1}), we can obtain
 \begin{equation}\label{eqn5.11}
  (\pounds_\xi g)(Y,Z)=0.
  \end{equation}
 Using (\ref{eqn5.11}) in (\ref{eqn4.1}) we get the relation (\ref{eqn4.3}) and hence we can state the following:
 \begin{theorem}
   If $(g,\xi,\lambda)$ is a Ricci Soliton on an anti-invariant submanifold $M$ of a SQ-Sasakian manifold $M^*$ such that $\xi$
   is tangent to $M$, then $M$ is Einstein.
 \end{theorem}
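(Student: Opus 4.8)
The plan is to carry out the same computation that proved Theorem 4.1, but replacing the invariant relation $\nabla_X\xi = -\beta\phi X$ from (\ref{eqn3.1}) with the anti-invariant relation $\nabla_X\xi = 0$ from (\ref{eqn5.1}). The starting point is the defining equation (\ref{eqn4.1}) of a Ricci soliton with potential field $\xi$, namely
\begin{equation*}
(\pounds_\xi g)(Y,Z) + 2S(Y,Z) + 2\lambda g(Y,Z) = 0.
\end{equation*}
So the whole argument reduces to evaluating the Lie-derivative term $(\pounds_\xi g)(Y,Z)$ under the anti-invariant hypothesis.

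First I would write out $(\pounds_\xi g)(Y,Z) = g(\nabla_Y\xi, Z) + g(Y, \nabla_Z\xi)$, which is the standard expression for the Lie derivative of the metric in terms of the induced Levi-Civita connection $\nabla$ on $M$. This is legitimate because $\xi$ is tangent to $M$ by hypothesis. Now the key simplification: by Proposition 5.1, equation (\ref{eqn5.1}), we have $\nabla_X\xi = 0$ for every $X$ tangent to $M$. Substituting $X = Y$ and $X = Z$ gives $g(\nabla_Y\xi,Z) = 0$ and $g(Y,\nabla_Z\xi) = 0$, so that
\begin{equation*}
(\pounds_\xi g)(Y,Z) = 0,
\end{equation*}
which is exactly the relation (\ref{eqn5.11}) quoted just before the theorem statement.

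Finally I would substitute this vanishing back into (\ref{eqn4.1}), obtaining $2S(Y,Z) + 2\lambda g(Y,Z) = 0$, i.e. $S(Y,Z) = -\lambda g(Y,Z)$, which is precisely equation (\ref{eqn4.3}). Since the Ricci tensor is a constant multiple of the metric, $M$ is Einstein, completing the proof. I do not expect any genuine obstacle here: the anti-invariant case is in fact cleaner than the invariant case of Theorem 4.1, because there $(\pounds_\xi g)(Y,Z)$ vanished only after using the skew-symmetry $g(\phi Y,Z) + g(Y,\phi Z) = 0$ from (\ref{eqn1.3}) to cancel two nonzero terms, whereas here each term vanishes individually from (\ref{eqn5.1}). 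The only point requiring care is that $\xi$ must be assumed tangent to $M$ so that Proposition 5.1 applies and the induced-connection formula for $\pounds_\xi g$ is valid; this is exactly the standing hypothesis of the theorem.
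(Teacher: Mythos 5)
Your proposal is correct and follows exactly the paper's own argument: substitute $\nabla_X\xi=0$ from (\ref{eqn5.1}) into $(\pounds_\xi g)(Y,Z)=g(\nabla_Y\xi,Z)+g(Y,\nabla_Z\xi)$ to obtain (\ref{eqn5.11}), then feed this into the soliton equation (\ref{eqn4.1}) to recover (\ref{eqn4.3}), so $M$ is Einstein. Your closing observation that the cancellation here is even more immediate than in Theorem 4.1 (termwise vanishing rather than the skew-symmetry of $\phi$) is an accurate remark, and nothing further is needed.
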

 Again, if $(g,\xi,\lambda)$ is a Ricci Soliton on an anti-invariant submanifold $M$ of a SQ-Sasakian manifold $M^*$ with
 respect to semisymmetric metric connection such that $\xi$ is tangent to $M$. Then we have the relation (\ref{eqn4.6}).
 From (\ref{eqn5.7}) we can calculate
 \begin{equation}\label{eqn5.12}
   (\widetilde{\pounds}_\xi g)(Y,Z)= 2[g(Y,Z)-\eta(Y)\eta(Z)].
 \end{equation}
So, using (\ref{eqn4.8}) and (\ref{eqn5.12}) in (\ref{eqn4.6}), we get the relation (\ref{eqn4.9}), which implies that $M$
is pseudo $\eta$-Einstein. Thus we can state the following:
\begin{theorem}
  If $(g.\xi,\lambda)$ is Ricci Soliton on an anti-invariant submanifold $M$ of an SQ-Sasakian manifold $M^*$ with respect to
  semisymmetric metric connection such that $\xi$ is tangent to $M$, then $M$ is pseudo $\eta$-Einstein.
\end{theorem}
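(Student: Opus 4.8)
The plan is to specialise the semisymmetric-connection Ricci-soliton equation (\ref{eqn4.6}) to the present anti-invariant, $\xi$-tangent situation and then to recognise the resulting expression for $S$ as a pseudo $\eta$-Einstein identity of the form (\ref{eqn2.17}). The two inputs I need are the Lie-derivative term $(\widetilde{\pounds}_\xi g)$ computed from the induced semisymmetric connection, and the relation (\ref{eqn4.8}) between $\widetilde{S}$ and the induced Ricci tensor $S$ of $M$; both are available in the preceding discussion, so the argument is essentially an organised substitution.

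First I would evaluate the Lie-derivative term. Because the induced semisymmetric connection $\widetilde{\nabla}$ is metric, I may write $(\widetilde{\pounds}_\xi g)(Y,Z)=g(\widetilde{\nabla}_Y\xi,Z)+g(Y,\widetilde{\nabla}_Z\xi)$ and then substitute $\widetilde{\nabla}_X\xi=X-\eta(X)\xi$ from (\ref{eqn5.7}). A short computation collapses this to (\ref{eqn5.12}), namely $(\widetilde{\pounds}_\xi g)(Y,Z)=2[g(Y,Z)-\eta(Y)\eta(Z)]$. Here it is \emph{essential} that $\xi$ be tangent to $M$, since only then does $\widetilde{\nabla}_X\xi$ reduce to the tangential expression (\ref{eqn5.7}); in the companion case where $\xi$ is normal to $M$ the analogous object is the normal field $\widetilde{\nabla}^\bot_X\xi=-\beta\phi X$, and this route would not be available.

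Next I would substitute (\ref{eqn5.12}) together with (\ref{eqn4.8}) into (\ref{eqn4.6}) and solve for $S(Y,Z)$. This is a purely algebraic step: grouping the terms proportional to $g(Y,Z)$, $\eta(Y)\eta(Z)$ and $\Phi(Y,Z)$ produces an identity of the form (\ref{eqn4.9}),
\[
S(Y,Z)=p\,g(Y,Z)+q\,\eta(Y)\eta(Z)+s\,\Phi(Y,Z),
\]
with $p,q,s$ the scalars recorded there, in particular $q=2m$ and $s=2m-1$. To conclude that $M$ is pseudo $\eta$-Einstein in the sense of (\ref{eqn2.17}), I would set $D:=\Phi$ and verify the three defining requirements: $q=2m\neq0$ and $s=2m-1\neq0$ for $m\geq1$, and $D(X,\xi)=\Phi(X,\xi)=g(X,\phi\xi)=0$ by (\ref{eqn1.1}). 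These checks show that (\ref{eqn4.9}) is exactly of type (\ref{eqn2.17}), and the theorem follows. The only point demanding care is the handling of the Lie derivative with the torsion-bearing connection; once the metric identity for $(\widetilde{\pounds}_\xi g)$ is justified and (\ref{eqn5.7}) inserted, the remainder is routine bookkeeping together with the structural verification $D(\cdot,\xi)=0$.
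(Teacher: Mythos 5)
Your proposal is correct and follows the paper's own proof essentially step for step: compute $(\widetilde{\pounds}_{\xi}g)$ from (\ref{eqn5.7}) to obtain (\ref{eqn5.12}), substitute it together with (\ref{eqn4.8}) into the soliton equation (\ref{eqn4.6}), and read off (\ref{eqn4.9}). Your only addition is the explicit verification that (\ref{eqn4.9}) fits the definition (\ref{eqn2.17}) with $D=\Phi$ (checking $q\neq 0$, $s\neq 0$ and $\Phi(X,\xi)=0$), a point the paper asserts without comment.
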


\maketitle
\section{Conclusion}
In this paper, Chaki-pseudo parallel and Deszcz-pseudo parallel invariant submanifolds of SQ-Sasakian manifolds are studied.
It is known that Chaki-pseudo Ricci symmetric manifolds and Deszcz-pseudo Ricci symmetric manifolds are different. However,
it is proved that Chaki-pseudo parallel invariant submanifolds of SQ-Sasakian manifolds and Deszcz-pseudo parallel invariant submanifolds
of SQ-Sasakian manifolds are equivalent with a certain condition. Also it is shown that Chaki-pseudo parallel invariant submanifolds
of SQ-Sasakian manifolds with respect to semisymmetric metric connection and Deszcz-pseudo parallel invariant submanifolds of SQ-Sasakian
manifolds with respect to semisymmetric metric connection are equivalent with respect to a certain condition.

Among others, we have studied invariant and anti-invariant submanifolds of SQ-Sasakian manifolds $M^*$ whose metric are Ricci solitons. From Theorem $4.1$, Theorem $4.3$, Theorem $5.1$ and Theorem $5.2$, we can state the following:
\begin{theorem}
  Let $(g,\xi,\lambda)$ be a Ricci soliton on a submanifold $M$ of a SQ-Sasakian manifold $M^*$.
  Then the following holds:\\

\begin{tabular}{|c|c|c|}
  \hline
  \textbf{nature of submanifold} & \textbf{connection of $M^*$} & \textbf{M}  \\  \cline{1-3}
  invariant submanifold & Levi-Civita & Einstein \\ \cline{2-3}
  &semisymmetric metric&pseudo $\eta$-Einstein \\ \cline{1-3}

  anti-invariant submanifold& Levi-Civita & Einstein \\ \cline{2-3}
  such that $\xi$ is tangent to $M$&semisymmetric metric &pseudo $\eta$-Einstein \\ \cline{1-3}
  \hline
\end{tabular}

\end{theorem}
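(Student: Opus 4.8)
The plan is to read this result as a consolidation of the four soliton statements already established, namely Theorem 4.1, Theorem 4.3, Theorem 5.1 and Theorem 5.2, one per row of the table. Accordingly I would not attempt a single unified computation but verify each of the four entries in turn, since each corresponds to a different ambient connection and a different position of $\xi$ relative to $M$. The common engine in every case is the defining soliton identity $(\pounds_\xi g)(Y,Z)+2S(Y,Z)+2\lambda g(Y,Z)=0$ (its tilde-analogue for the semisymmetric rows), so the whole problem reduces to evaluating the Lie derivative $\pounds_\xi g$ on each submanifold and then reading off the resulting algebraic constraint on the Ricci tensor.

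First I would dispose of the two Levi-Civita rows, where the conclusion is that $M$ is Einstein. For an invariant submanifold I would use $\nabla_X\xi=-\beta\phi X$ from (\ref{eqn3.1}), so that $(\pounds_\xi g)(Y,Z)=-\beta[g(\phi Y,Z)+g(Y,\phi Z)]$ vanishes by the skew-symmetry $g(\phi X,Y)=-g(X,\phi Y)$ in (\ref{eqn1.3}). For an anti-invariant submanifold with $\xi$ tangent to $M$ the vanishing is even cleaner, since (\ref{eqn5.1}) gives $\nabla_X\xi=0$ outright. In both situations $\pounds_\xi g=0$, so the soliton equation collapses to $S(Y,Z)=-\lambda g(Y,Z)$, which is exactly the Einstein condition; this handles rows one and three.

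Next I would treat the two semisymmetric-metric rows, where the expected conclusion is pseudo $\eta$-Einstein. The key observation is that the Lie derivative no longer vanishes: for the invariant case $\widetilde{\nabla}_X\xi=-\beta\phi X+X-\eta(X)\xi$ (from (\ref{eqn3.15})) and for the anti-invariant case $\widetilde{\nabla}_X\xi=X-\eta(X)\xi$ (from (\ref{eqn5.7})), and in both the $-\beta\phi X$ part is again annihilated by skew-symmetry, leaving the common value $(\widetilde{\pounds}_\xi g)(Y,Z)=2[g(Y,Z)-\eta(Y)\eta(Z)]$. Substituting this together with the relation $\widetilde{S}(Y,Z)=S(Y,Z)-(2m-1)\{g(Y,Z)-\eta(Y)\eta(Z)+\Phi(Y,Z)\}$ from (\ref{eqn4.8}) into the tilde soliton identity and solving for $S$ produces the expression (\ref{eqn4.9}), i.e. $S(Y,Z)=(2m-\lambda+2)g(Y,Z)+2m\,\eta(Y)\eta(Z)+(2m-1)\Phi(Y,Z)$, which fits the template (\ref{eqn2.17}) and hence certifies the pseudo $\eta$-Einstein conclusion for rows two and four.

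Since all four ingredients are already available, there is no genuine analytic obstacle; the only point demanding care is bookkeeping. The main thing to watch is that the semisymmetric rows genuinely differ from the Levi-Civita rows precisely because $\widetilde{\pounds}_\xi g$ is the nonzero tensor $2[g-\eta\otimes\eta]$ rather than $0$, so that the conversion through the Ricci-tensor relation (\ref{eqn4.8}) is what downgrades the Einstein conclusion to pseudo $\eta$-Einstein. To certify the latter in the sense of (\ref{eqn2.17}) one should confirm that the coefficients $q=2m$ and $s=2m-1$ of $\eta\otimes\eta$ and $\Phi$ in (\ref{eqn4.9}) are nonzero and that $\Phi$ obeys $\Phi(X,\xi)=g(X,\phi\xi)=0$ by $\phi\xi=0$ in (\ref{eqn1.1}), so that $\Phi$ legitimately plays the role of the tensor $D$ with $D(\cdot,\xi)=0$. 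With these conventions checked, assembling the table is immediate.
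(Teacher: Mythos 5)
Your proposal is correct and takes essentially the same route as the paper, which obtains this theorem purely by consolidating Theorems 4.1, 4.3, 5.1 and 5.2 --- whose proofs are precisely your four row-by-row computations ($\pounds_\xi g=0$ via (\ref{eqn3.1}) resp.\ (\ref{eqn5.1}) for the Levi-Civita rows, and $(\widetilde{\pounds}_\xi g)(Y,Z)=2[g(Y,Z)-\eta(Y)\eta(Z)]$ combined with (\ref{eqn4.8}) to reach (\ref{eqn4.9}) for the semisymmetric rows); your extra check that $\Phi(X,\xi)=0$ so that $\Phi$ can serve as the tensor $D$ in (\ref{eqn2.17}) is a point the paper leaves implicit. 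One cosmetic slip worth noting: in the anti-invariant semisymmetric row the $-\beta\phi X$ term is absent from $\widetilde{\nabla}_X\xi$ because it lies in the normal part $\widetilde{h}(X,\xi)$ (see (\ref{eqn5.8})), not because it is ``annihilated by skew-symmetry'' --- this does not affect the validity of the argument.
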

\noindent\textbf{Acknowledgement:} The authors gratefully acknowledges to the SERB (Project No: EMR/2015/002302), Govt. of India for financial assistance of the work.

\vspace{0.1in}
\noindent Shyamal Kumar Hui and Joydeb Roy\\
Department of Mathematics, The University of Burdwan,\\
Burdwan, 713104, West Bengal, India.\\
E-mail: skhui@math.buruniv.ac.in; joydeb.roy8@gmail.com

\end{document}